\newcommand{\de}{\partial}
\newcommand{\ov}[1]{\overline{#1}}
\newcommand{\ti}[1]{\tilde{#1}}
\newcommand{\vol}{\mathrm{Vol}}
\renewcommand{\leq}{\leqslant}
\renewcommand{\geq}{\geqslant}
\newcommand{\be}{\begin{equation}}
\newcommand{\ee}{\end{equation}}
\newcommand{\R}{\mathbb{R}}
\newcommand{\C}{\mathbb{C}}
\renewcommand{\H}{\mathcal{H}}
\renewcommand{\O}{\mathcal{O}}
\newcommand{\E}{\mathcal{E}}
\newcommand\ord{\mathrm{ord}}
\newcommand{\dist}{\mathrm{dist}}
\begin{document}
\newcounter{remark}
\newcounter{theor}
\setcounter{theor}{1}
\newtheorem{claim}{Claim}
\newtheorem{theorem}{Theorem}[section]
\newtheorem{lemma}[theorem]{Lemma}
\newtheorem{corollary}[theorem]{Corollary}
\newtheorem{conjecture}[theorem]{Conjecture}
\newtheorem{proposition}[theorem]{Proposition}
\newtheorem{question}{Question}[section]
\newtheorem{definition}[theorem]{Definition}
\newtheorem{remark}[theorem]{Remark}

\numberwithin{equation}{section}

\title{The rigidity of dimension estimate for holomorphic functions on K\"ahler manifolds}

\author{Jianchun Chu}
\address[Jianchun Chu]{School of Mathematical Sciences, Peking University, Yiheyuan Road 5, Beijing 100871, People's Republic of China}
\email{jianchunchu@math.pku.edu.cn}

\author{Jie Deng}
\address[Jie Deng]{School of Mathematical Sciences, Peking University, Yiheyuan Road 5, Beijing 100871, People's Republic of China}
\email{dj0401@stu.pku.edu.cn}

\author{Zihang Hao}
\address[Zihang Hao]{School of Mathematical Sciences, Peking University, Yiheyuan Road 5, Beijing 100871, People's Republic of China}
\email{2301110014@pku.edu.cn}

\author{Jian Li}
\address[Jian Li]{School of Mathematical Sciences, Peking University, Yiheyuan Road 5, Beijing 100871, People's Republic of China}
\email{jianli25@stu.pku.edu.cn}

\begin{abstract}
In this paper, we obtain the optimal rigidity of dimension estimate for holomorphic functions with polynomial growth on K\"ahler manifolds with non-negative holomorphic bisectional curvature. There is a specific gap between the largest and the second largest dimension. We also determine the optimal dimension that ensures the maximal volume growth which implies the manifold is biholomorphic to the complex Euclidean space.
\end{abstract}

\subjclass[2020]{Primary 32Q30; Secondary 32Q10, 32Q15}

\maketitle

\markboth{Jianchun Chu, Jie Deng, Zihang Hao, and Jian Li}{The rigidity of dimension estimate for holomorphic functions on K\"ahler manifolds}

\section{Introduction}\label{introduction}

The classical uniformization theorem states that a simply-connected Riemann surface is conformally equivalent to the Riemman sphere $\mathbb{CP}^{1}$, the complex plane $\C$, or the open unit disc $\mathbb{D}$. In particular, when the Riemann surface has positive sectional curvature, the following holds.
\begin{enumerate}\setlength{\itemsep}{1mm}
\item[(i)] Each compact Riemann surface with positive curvature is conformally equivalent to the Riemman sphere $\mathbb{CP}^{1}$.
\item[(ii)] Each non-compact complete Riemann surface with positive curvature is conformally equivalent to the complex plane $\C$.
\end{enumerate}
Since each Riemann surface is an one-dimensional K\"ahler manifold, then a natural and fundamental question in K\"ahler geometry is to generalize the above classical results to higher dimensions. The following is the definition of holomorphic bisectional curvature, which is the K\"ahler analog of Riemannian sectional curvature.

\begin{definition}
Let $(M,\omega)$ be an $n$-dimensional K\"ahler manifold and $K$ be a constant. We say that the holomorphic bisectional curvature is greater than or equal to $K$ if
\[
R(u,\ov{u},v,\ov{v}) \geq K\big(|u|^{2}|v|^{2}+|\langle u,\ov{v}\rangle|^{2}\big)
\]
for any $(1,0)$-vectors $u$ and $v$.
\end{definition}

As natural generalizations of (i) and (ii), Frankel \cite{Frankel61} and Yau \cite{Yau89} proposed the following conjectures.

\begin{conjecture}[Frankel's Conjecture \cite{Frankel61}]
Let $(M,\omega)$ be a compact $n$-dimensional K\"ahler manifold with positive holomorphic bisectional curvature. Then $M$ is biholomorphic to $\mathbb{CP}^{n}$.
\end{conjecture}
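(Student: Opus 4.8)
The plan is to follow the Siu--Yau strategy: use positivity of the holomorphic bisectional curvature to constrain the topology of $M$, produce a rational curve on $M$ by minimal surface theory, show that every such curve is a line, and conclude $M\cong\mathbb{CP}^n$ via the characterization of projective space among Fano manifolds. For the topological preparation: tracing the curvature inequality over a unitary frame shows positive bisectional curvature implies positive Ricci curvature, so $c_1(M)>0$ and $M$ is Fano; in particular $H^q(M,\O_M)=0$ for $q>0$ and $M$ is simply connected, and by the Bishop--Goldberg theorem $b_2(M)=1$, so the exponential sequence gives $\mathrm{Pic}(M)\cong H^2(M;\mathbb{Z})\cong\mathbb{Z}\cdot L$ with $L$ ample and $[\omega]$ proportional to $c_1(L)$.

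To produce a rational curve, observe that $M$ is simply connected with $b_2\ge 1$, so Hurewicz gives $\pi_2(M)\neq 0$, and by Sacks--Uhlenbeck there is a nonconstant conformal harmonic (hence minimal) map $f\colon S^2\to M$ of least area in a nontrivial free homotopy class. The crucial step is to show $f$ is $\pm$-holomorphic: feeding into the second variation formula for area the variations of $f$ generated by the ambient complex structure $J$ and using the positivity of the bisectional curvature to fix the sign of the curvature term, the stability inequality is forced to collapse to the Cauchy--Riemann equations for $f$. Replacing $J$ by $-J$ if needed and choosing $f$ generically injective, $C:=f(S^2)$ is the image of a holomorphic immersion $\mathbb{CP}^1\hookrightarrow M$, and since $[\omega]$ is proportional to $c_1(L)$, the minimality of its area makes $C$ a rational curve of minimal degree.

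Next I would analyze $f^*T_M$. By Grothendieck's theorem $f^*T_M\cong\bigoplus_{i=1}^n\O(a_i)$ with $a_1\ge\cdots\ge a_n$; the differential embeds $T_{\mathbb{CP}^1}=\O(2)$, so $a_1\ge 2$, while pulling back the Kähler metric endows $f^*T_M$ with a metric whose curvature is Griffiths-semipositive everywhere and Griffiths-positive wherever $df\neq 0$, hence every quotient line bundle has positive degree, i.e.\ $a_n\ge 1$. Therefore $-K_M\cdot C=\sum_i a_i\ge n+1$, and the same computation applies to every rational curve of $M$. On the other hand, $M$ being Fano, Mori's bend-and-break produces a rational curve of anticanonical degree $\le n+1$ through a general point. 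Combining the two bounds, the minimal rational curves of $M$ have anticanonical degree exactly $n+1$ and splitting type $\O(2)\oplus\O(1)^{\oplus(n-1)}$; they are lines sweeping out $M$, and a Fano manifold carrying such a family is biholomorphic to $\mathbb{CP}^n$ by the characterization of projective space (Kobayashi--Ochiai; Cho--Miyaoka--Shepherd-Barron), completing the proof.

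The hard part is the passage from ``stable minimal sphere'' to ``holomorphic curve'': this is the technical heart of Siu--Yau and requires both the sharp form of the second variation inequality and the right choice of competing variations so that the bisectional curvature term has a definite sign; one must also ensure the area-minimizer is realized by a generically injective map, so that the splitting analysis of $f^*T_M$ is valid. A conceptually different route that bypasses minimal surfaces is Mori's original characteristic-$p$ argument: use Frobenius and bend-and-break to produce low-degree rational curves through every point directly, then run the same tangent-bundle splitting analysis (now invoking only that positive bisectional curvature makes $T_M$ sufficiently positive) to force the anticanonical degree to equal $n+1$ and conclude.
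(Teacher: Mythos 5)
The statement you were asked to prove is labelled a \emph{conjecture} in the paper and is included purely as historical background: the authors explicitly attribute its resolution to Siu--Yau \cite{SY80} and Mori \cite{Mori79} and give no proof of their own. So there is nothing in the paper to compare your argument against; what I can do is assess your sketch on its merits.

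Your outline is a faithful reconstruction of the Siu--Yau strategy (with the upper degree bound supplied by Mori-style bend-and-break rather than by the minimality of the Sacks--Uhlenbeck sphere), and the overall architecture is sound: positivity of bisectional curvature gives Fano, simple connectedness, and $b_2=1$ (Bishop--Goldberg); a stable minimal two-sphere exists; stability plus curvature positivity forces $\pm$-holomorphicity; the Grothendieck splitting of $f^*T_M$ together with Griffiths positivity along the image gives $-K_M\cdot C=\sum a_i\geq n+1$ with $a_1\geq 2$, $a_i\geq 1$; and equality plus Kobayashi--Ochiai (or Cho--Miyaoka--Shepherd-Barron) identifies $M$ with $\mathbb{CP}^n$. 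However, as a proof rather than a roadmap it has two genuine gaps, both of which you candidly flag. First, Sacks--Uhlenbeck does \emph{not} directly give an energy-minimizing harmonic sphere in an arbitrary nontrivial free homotopy class: bubbling can prevent the infimum from being attained, and one must instead work with a generating set of $\pi_2(M)$ each class of which contains a minimizing harmonic sphere, then argue that at least one of these is nonconstant and usable. Second, the passage from ``stable minimal sphere'' to ``holomorphic or antiholomorphic map'' is the entire technical content of Siu--Yau: it requires complexifying the second variation, choosing the comparison variations $v$ and $Jv$ so that the curvature term appears as a bisectional curvature of a specific sign, and handling the branch points of $f$; asserting that ``the stability inequality is forced to collapse to the Cauchy--Riemann equations'' names the destination without making the journey. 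Until those two steps are carried out (or explicitly cited as black boxes from \cite{SY80}), the argument is an accurate summary of a known proof rather than a proof.
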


\begin{conjecture}[Yau's Uniformization Conjecture \cite{Yau89}]\label{Yau uniformization conjecture}
Let $(M,\omega)$ be a complete non-compact $n$-dimensional K\"ahler manifold with positive holomorphic bisectional curvature. Then $M$ is biholomorphic to $\C^{n}$.
\end{conjecture}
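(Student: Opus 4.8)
The plan is to follow the strategy that has become standard since Mok--Siu--Yau and Ni--Tam and that culminates in Liu's resolution of the conjecture under maximal volume growth: rather than writing down a biholomorphism directly, one first produces enough holomorphic functions of polynomial growth on $M$ to embed it as an affine algebraic variety in some $\C^{N}$, and then uses volume‑growth and degree considerations to identify the image with $\C^{n}$. The two structural inputs that make this feasible are (a) a sharp dimension estimate: the space $\mathcal{O}_{d}(M)$ of holomorphic functions of polynomial growth of degree at most $d$ has dimension bounded by its counterpart on $\C^{n}$, with equality forcing rigidity --- exactly the kind of statement that the present paper refines --- and (b) the nonnegativity of the holomorphic bisectional curvature, which by Mok's and Bishop--Gromov's comparison forces $\Ric\geq 0$ and at most Euclidean volume growth, and which is preserved along the K\"ahler--Ricci flow.

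First I would construct a nontrivial holomorphic function of polynomial growth. Starting from $\Ric\geq 0$, one solves a Poincar\'e--Lelong type equation $\ddbar u = \Ric$ (equivalently, runs the K\"ahler--Ricci flow from $\omega$ and analyzes the long‑time limit, or solves $\Delta u = $ scalar curvature via Ni--Tam's heat‑equation estimates). This yields a plurisubharmonic exhaustion $u$ of at most logarithmic growth whose $\ddbar$ provides a positive weight; H\"ormander's $L^{2}$ estimate for $\dbar$ on the complete K\"ahler manifold $(M,\omega)$, applied with this weight, then produces holomorphic functions with prescribed leading behaviour at finitely many points and controlled growth. Iterating this, together with a sharp three‑circle theorem for the maximum modulus of holomorphic functions on K\"ahler manifolds with nonnegative bisectional curvature (so that the growth order is quantized), one shows that $\bigcup_{d}\mathcal{O}_{d}(M)$ separates points and tangent vectors and is in fact a finitely generated algebra.

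Choosing generators $f_{1},\dots,f_{N}$ gives a proper holomorphic embedding $F=(f_{1},\dots,f_{N})\colon M\hookrightarrow\C^{N}$ with image an $n$‑dimensional affine variety $V$. To see that $V$ is smooth and of degree one --- hence $V\cong\C^{n}$ and $M$ is biholomorphic to $\C^{n}$ --- I would compare volumes: the Bishop--Gromov upper bound on $\vol(B_{r}(p))$ and a matching lower bound (available once one knows, via Cheeger--Colding, that the tangent cone at infinity is a metric cone, and, via a Donaldson--Sun–type argument, that it carries a normal affine‑variety structure) pin the volume growth ratio, and this ratio computes the degree of the projectivized image, forcing it to be $1$. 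Strict positivity of the bisectional curvature is convenient here only to exclude split or flat factors and nontrivial cone singularities; the serious content lies entirely in the volume comparison.

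The main obstacle is the second step in the stated generality: without a maximal‑volume‑growth hypothesis there is presently no mechanism guaranteeing that the algebra of polynomial‑growth holomorphic functions is large enough to separate points, nor that the tangent cone at infinity is unique or has an affine structure, and the H\"ormander construction also requires quantitative curvature and injectivity‑radius control that does not follow from the sign of the bisectional curvature alone. Consequently a complete proof either restricts to maximal volume growth --- where the argument above is Liu's --- or must supply a genuinely new device, e.g. a refined K\"ahler--Ricci flow analysis or a direct PDE construction of holomorphic coordinates, to manufacture the required functions when the manifold collapses near infinity.
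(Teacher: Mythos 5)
The statement you were asked about is labelled a \emph{conjecture} in the paper, and the paper does not prove it: the authors explicitly note that Yau's uniformization conjecture ``is still open in general,'' and they only cite the partial resolutions of Chau--Tam \cite{CT06} (bounded curvature plus maximal volume growth) and of Liu \cite{Liu19} and Lee--Tam \cite{LT20} (maximal volume growth). So there is no ``paper's own proof'' to compare against, and your text should be judged purely on whether it constitutes a proof. It does not, and to your credit you say so yourself in the final paragraph: every step after the construction of a single plurisubharmonic exhaustion --- the $L^{2}$ production of \emph{enough} polynomial-growth holomorphic functions to separate points and tangent vectors, the finite generation of $\bigcup_{d}\mathcal{O}_{d}(M)$, the properness of the resulting map, and the identification of the tangent cone at infinity with an affine variety of degree one --- currently requires the maximal volume growth hypothesis (or bounded curvature), which does not follow from positivity of the holomorphic bisectional curvature alone. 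In the collapsed setting the three-circle theorem and the dimension estimate $\dim\mathcal{O}_{d}(M)\leq\dim\mathcal{O}_{d}(\C^{n})$ give only upper bounds; nothing forces $\mathcal{O}_{d}(M)$ to be nontrivial, and indeed the hard open problem is precisely to rule out the possibility that all polynomial-growth holomorphic functions are constant.

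Concretely, then, the gap is not a fixable technical lapse but the absence of the central idea the conjecture still awaits: a mechanism for manufacturing nonconstant holomorphic functions of polynomial growth (or holomorphic coordinates directly) when the volume growth is not maximal. What you have written is an accurate survey of the Siu--Yau/Mok--Siu--Yau/Ni/Liu program and correctly locates where it breaks down, but a proof proposal that ends by identifying an unfilled essential step is a statement of the problem, not a solution. If your goal is to engage with this paper specifically, note that its actual contributions are downstream of this program: it takes the dimension estimate $\dim\mathcal{O}_{d}(M)\leq\binom{n+d}{d}$ and its rigidity (Theorem \ref{dimension thm}) as input and proves a gap theorem for the possible values of $\dim\mathcal{O}_{d}(M)$ just below the maximum, rather than attempting the uniformization conjecture itself.
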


Frankel's conjecture was resolved independently by Siu-Yau \cite{SY80} via differential geometry and by Mori \cite{Mori79} via algebraic geometry (more general Hartshorne conjecture was proved, see \cite{Hartshorne70}). As a natural continuation, Howard-Smyth-Wu \cite{HSW81} and Mok \cite{Mok88} established the structure theorem for compact K\"ahler manifolds with non-negative holomorphic bisectional curvature: the universal cover of a compact K\"ahler manifold with non-negative holomorphic bisectional curvature is biholomorphically isometric to the product of complex Euclidean space with compact K\"ahler manifolds of certain type.

For Yau's uniformization conjecture, although it is still open in general, the research along this direction has produced many significant results and methods. In \cite{SY77}, Siu-Yau initiated a program of constructing embedding of a complete K\"ahler manifold into the complex Euclidean space by using holomorphic functions with polynomial growth (see Definition \ref{defs O and Od} below). Such strategy was applied in the work of Mok-Siu-Yau \cite{MSY81} and Mok \cite{Mok84}, where Conjecture \ref{Yau uniformization conjecture} was solved under the maximal volume growth and some curvature decay assumptions. Via K\"ahler-Ricci flow, Chau-Tam \cite{CT06} confirmed Conjecture \ref{Yau uniformization conjecture} under the maximal volume growth and bounded curvature assumptions. Later, only assuming the maximal volume growth, Conjecture \ref{Yau uniformization conjecture} was solved independently by Liu \cite{Liu19} via Gromov-Hausdorff convergence theory and by Lee-Tam \cite{LT20} via K\"ahler-Ricci flow. Here we mention that the above list is far from complete. There are many important contributions by various authors on Conjecture \ref{Yau uniformization conjecture}. We refer the reader to surveys \cite{CT08,CZ24} and the references therein.

\begin{definition}\label{defs O and Od}
Let $(M,\omega)$ be a complete $n$-dimensional K\"ahler manifold. Define
\[
\O(M) = \{f: \text{$f$ is a holomorphic function on $M$}\}.
\]
For $p_{0}\in M$ and $d\geq0$, define
\[
\O_{d}(M) = \{f\in\O(M): |f(p)| \leq C\big(1+r(p_{0},p)\big)^{d} \ \text{for some constant $C$}\},
\]
where $r(p_{0},p)$ denotes the distance between $p_{0}$ and $p$. If $f\in\O_{d}(M)$ for some $d\geq0$, then we say that $f$ is a holomorphic function with polynomial growth.
\end{definition}

For Conjecture \ref{Yau uniformization conjecture}, motivated by the above mentioned strategy of Siu-Yau \cite{SY77}, Yau \cite{Yau90} proposed the study of the space $\O_{d}(M)$ when $(M,\omega)$ has non-negative holomorphic bisectional curvature. It was expected that the following dimension estimate holds:
\[
\dim\mathcal{O}_{d}(M) \leq \dim\mathcal{O}_{d}(\mathbb{C}^{n})
\]
and the equality holds if and only if $(M,\omega)$ is biholomorphically isometric to $(\mathbb{C}^{n},\omega_{\C^{n}})$, where $\omega_{\C^{n}}$ denotes the standard metric on $\C^{n}$.

This expectation was confirmed by Ni \cite{Ni04} under the maximal volume growth assumption, i.e. there exist point $p_{0}\in M$ and constant $c>0$ such that
\[
\vol(B_{r}(p_{0})) \geq c r^{2n} \ \ \text{for any $r>0$}.
\]
Based on the work of \cite{Ni04}, Chen-Fu-Yin-Zhu \cite{CFYZ06} removed the maximal volume growth assumption and then proved this expectation completely. When $(M,\omega)$ is not biholomorphically isometric to $(\mathbb{C}^{n},\omega_{\C^{n}})$, some sharp improved dimension estimates were also established in \cite{Ni04,CFYZ06}. Later, Liu \cite{Liu16a} gave an alternative proof of the above expectation under weaker curvature assumption (see Theorem \ref{Liu result}).

\begin{theorem}[Ni \cite{Ni04}, Chen-Fu-Yin-Zhu \cite{CFYZ06}, Liu \cite{Liu16a}]\label{dimension thm}
Let $(M,\omega)$ be a complete $n$-dimensional K\"ahler manifold with non-negative holomorphic bisectional curvature. For any integer $d\geq1$,
\begin{equation}\label{dimension estimate}
\dim\mathcal{O}_{d}(M) \leq \dim\mathcal{O}_{d}(\mathbb{C}^{n}),
\end{equation}
and the equality holds if and only if $(M,\omega)$ is biholomorphically isometric to $(\mathbb{C}^{n},\omega_{\C^{n}})$.
\end{theorem}

\subsection{First rigidity}

The dimension estimate \eqref{dimension estimate} in Theorem \ref{dimension thm} shows that, among all complete $n$-dimensional K\"ahler manifolds with non-negative holomorphic bisectional curvature, the largest dimension $\dim\mathcal{O}_{d}(M)$ can reach is
\[
\dim\mathcal{O}_{d}(\mathbb{C}^{n}) = \binom{n+d}{d}.
\]
On the other hand, in Section \ref{examples}, we show that there exist infinitely many complete K\"ahler metrics $\omega_{\lambda}$ on $\C^{n}$ such that $(\C^{n},\omega_{\lambda})$ has non-negative holomorphic bisectional curvature and satisfies
\[
\dim\mathcal{O}_{d}(\C^{n},\omega_{\lambda}) = \dim\mathcal{O}_{d}(\mathbb{C}^{n})-\binom{n+d-2}{d-1}.
\]
For convenience, we set
\[
N_{1} = \dim\mathcal{O}_{d}(\mathbb{C}^{n}), \ \
N_{2} = \dim\mathcal{O}_{d}(\mathbb{C}^{n})-\binom{n+d-2}{d-1}.
\]
It is natural to ask: does there exist K\"ahler manifold $(M,\omega)$ with non-negative holomorphic bisectional curvature such that
\[
N_{2}+1 \leq \dim\mathcal{O}_{d}(M) \leq N_{1}-1?
\]

Our first result shows that the answer is negative. Equivalently, for the space $\O_{d}(M)$, the dimension has a gap phenomenon and second largest dimension is exactly $N_{2}$.

\begin{theorem}[First Rigidity]\label{first rigidity}
Let $(M,\omega)$ be a complete $n$-dimensional K\"ahler manifold with non-negative holomorphic bisectional curvature. If there exists an integer $d\geq1$ such that
\[
\dim\mathcal{O}_{d}(M) \geq N_{2}+1,
\]
then $(M,\omega)$ is biholomorphically isometric to $(\mathbb{C}^{n},\omega_{\C^{n}})$.
\end{theorem}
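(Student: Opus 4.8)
The plan is to leverage Theorem~\ref{dimension thm} together with a sharp quantitative version of the dimension estimate that must already underlie the improved estimates of Ni~\cite{Ni04} and Chen--Fu--Yin--Zhu~\cite{CFYZ06}. The key point is that when $(M,\omega)$ fails to be biholomorphically isometric to $(\C^n,\omega_{\C^n})$, the manifold must carry some nontrivial geometric/analytic defect, and this defect should force a drop in $\dim\O_d(M)$ of at least $\binom{n+d-2}{d-1}$ for every $d\ge 1$. Concretely, I would argue by contradiction: suppose $(M,\omega)$ is not biholomorphically isometric to $(\C^n,\omega_{\C^n})$ but $\dim\O_d(M)\ge N_2+1$ for some integer $d\ge 1$.

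First I would recall the structure of the proof of Theorem~\ref{dimension thm}. The upper bound $\dim\O_d(M)\le \binom{n+d}{d}$ is obtained by studying, for each $k$, the spaces $\O_k(M)$ and the successive quotients $\O_k(M)/\O_{k-1}(M)$, and bounding $\dim\big(\O_k(M)/\O_{k-1}(M)\big)$ by the dimension $\binom{n+k-1}{k}$ of degree-$k$ homogeneous polynomials on $\C^n$. Equality in the top estimate propagates down through all $k\le d$ and, via the associated graded ring structure and a transcendence-degree/algebraic-independence argument, produces $n$ algebraically independent functions in $\O_1(M)$ whose differentials trivialize the tangent bundle; combined with the non-negativity of the bisectional curvature and a Liouville-type/Bochner argument one concludes flatness and hence the biholomorphic isometry with $(\C^n,\omega_{\C^n})$. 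The crucial extra input I need is: \emph{if} the quotient $\O_1(M)/\O_0(M)$ already has maximal dimension $n$ but $M$ is not $(\C^n,\omega_{\C^n})$, then for some intermediate degree $k$ with $2\le k\le d$ the quotient $\O_k(M)/\O_{k-1}(M)$ must have strictly smaller dimension than $\binom{n+k-1}{k}$, and in fact the first such drop already costs at least the full $\binom{n+d-2}{d-1}$ when summed up to degree $d$. This is precisely the content of the sharp improved estimates proved in~\cite{Ni04,CFYZ06}, which I would cite (or reprove in the needed form) to see that the only possibilities strictly below $N_1$ are bounded above by $N_2$.

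The main obstacle, and the technical heart of the argument, is establishing that the \emph{second} largest value is exactly $N_2$ rather than merely some value in $[N_2, N_1-1]$. For this I would examine the algebra $\bigoplus_k \O_k(M)/\O_{k-1}(M)$: it is a graded subalgebra of the polynomial ring on $\C^n$ generated in degree $\le 1$ plus possibly higher-degree elements, and the first index $k_0$ at which the Hilbert function drops below $\binom{n+k_0-1}{k_0}$ controls everything afterwards. The key lemma is a ``one-step drop propagates'' statement: if $\dim\big(\O_{k_0}(M)/\O_{k_0-1}(M)\big)\le \binom{n+k_0-1}{k_0}-1$, then multiplicativity of the graded algebra forces $\dim\big(\O_k(M)/\O_{k-1}(M)\big)\le \binom{n+k-1}{k}-\binom{n+k-2}{k-1}$ for all $k\ge k_0$ (this is a Macaulay-type bound: a graded algebra that is not the full polynomial ring in some degree loses at least the "boundary" monomials in every higher degree, because a nonzero relation in degree $k_0$ generates a nontrivial ideal). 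Summing these deficits from $k=k_0$ to $k=d$, the total loss is at least $\binom{n+d-2}{d-1}$, exactly enough to push $\dim\O_d(M)$ down to $N_2$ or below, contradicting $\dim\O_d(M)\ge N_2+1$. Hence no such $k_0$ exists, so the Hilbert function is maximal in every degree up to $d$, which by Theorem~\ref{dimension thm} (applied at degree $d$) forces the biholomorphic isometry with $(\C^n,\omega_{\C^n})$.

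I expect the delicate points to be: (a) verifying that the associated graded object $\bigoplus_k \O_k(M)/\O_{k-1}(M)$ genuinely carries a multiplication making it a graded algebra mapping injectively into the polynomial ring (this requires that products of polynomial-growth functions have the expected growth order, which follows from the definition, and that leading terms multiply correctly, which uses the curvature hypothesis through the heat-flow/mean-value characterization of polynomial growth from~\cite{Ni04}); and (b) the combinatorial identity showing the summed deficit equals $\binom{n+d-2}{d-1}$, for which I would use the hockey-stick identity $\sum_{k=k_0}^d \binom{n+k-2}{k-1} \ge \binom{n+d-2}{d-1}$ together with monotonicity. Once these are in place, the rigidity statement follows directly from Theorem~\ref{dimension thm}.
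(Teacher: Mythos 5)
Your proposal has a genuine gap at its central lemma, and unfortunately that lemma is not merely unproven but false: it is contradicted by the paper's own examples in Section \ref{examples}. Take $n=2$, $d=2$ and $(\C^{2},\omega_{\lambda})$ with $\lambda\in(\tfrac12,1)$ close to $1$. Then $\O_{0}=\mathrm{span}\{1\}$, $\O_{1}=\mathrm{span}\{1,z_{2}\}$ and $\O_{2}=\mathrm{span}\{1,z_{1},z_{2},z_{2}^{2}\}$, so the successive quotients have dimensions $1,1,2$ against the maximal values $1,2,3$. The Hilbert function already drops at $k_{0}=1$, but your ``one-step drop propagates'' claim would force $\dim(\O_{2}/\O_{1})\le\binom{3}{2}-\binom{2}{1}=1$, whereas it equals $2$ (and correspondingly $\dim\O_{2}=4=N_{2}$, not $\le 3$ as your summed deficits would give). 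In these examples the total deficit $\binom{n+d-2}{d-1}$ is achieved by losing only $\binom{n+j-3}{j-1}$, not $\binom{n+j-2}{j-1}$, in each degree $j$; and no Macaulay/Gotzmann-type statement delivers even that, because the associated graded object is (at best) a graded \emph{subalgebra} of the polynomial ring rather than a quotient by an ideal, and subalgebras can be deficient in one degree yet full in higher degrees (e.g.\ $\C[z_{1}^{2},z_{1}^{3}]\subseteq\C[z_{1}]$). There is also a structural problem upstream: identifying $\O_{k}(M)/\O_{k-1}(M)$ with degree-$k$ homogeneous polynomials via ``leading terms'' is not available off the flat model. Theorem \ref{Poincare-Siegel map injective} only gives $\ord_{p_{0}}f\le\deg f$, and in the example above the growth filtration mixes polynomial degrees: $z_{1}$ has growth degree $1/\lambda\in(1,2)$, so the linear function $z_{1}$ sits in $\O_{2}/\O_{1}$.

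The paper's actual route is entirely different and avoids Hilbert-function combinatorics. It passes to the universal cover and, directly from $\dim\O_{d}\ge N_{2}+1$, extracts by a pigeonhole count a function $f$ with $\ord f=\deg f$ whose leading polynomial involves the non-Euclidean factor; the Chen--Fu--Yin--Zhu splitting theorem (Theorem \ref{splitting}) then splits off a $\C$-factor, and iterating (Theorem \ref{splitting Ck}, Corollary \ref{flatness}) shows the universal cover is flat $\C^{n}$. A separate argument (Theorem \ref{simply connectedness}), using the invariance of lifted polynomial-growth functions under deck transformations, shows $M$ is simply connected. Your proposal never addresses the fundamental group; this would be harmless if your counting argument really forced $\dim\O_{d}(M)=N_{1}$ so that Theorem \ref{dimension thm} could be quoted, but as explained above that counting argument fails.
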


\subsection{Second rigidity}

The next natural question is: when $\dim\O_{d}(M)\leq N_{2}$, what can we say about $(M,\omega)$? Due to examples $(\C^{n},\omega_{\lambda})$ in Section \ref{examples}, we can not expect the rigidity of the metric. Instead, we aim to determine the optimal constant $N_{3}$ such that the dimension condition
\[
\dim\mathcal{O}_{d}(M) \geq N_{3}+1
\]
ensures that $M$ is biholomorphic to $\mathbb{C}^{n}$. Consider the product manifold:
\[
(M_{0},\omega_{0}) = (\C^{n-1}\times\mathbb{CP}^{1},\omega_{\C^{n-1}}+\omega_{\mathbb{CP}^{1}}),
\]
where $\omega_{\C^{n-1}}$ is the standard Euclidean metric on $\C^{n-1}$ and $\omega_{\mathbb{CP}^{1}}$ is the Fubini-Study metric on $\mathbb{CP}^{1}$. It is clear that $(M_{0},\omega_{0})$ has non-negative holomorphic bisectional curvature, but is not biholomorphic to $\mathbb{C}^{n}$. Since $\mathbb{CP}^{1}$ is compact, then any holomorphic function on $M_{0}$ is actually the pull-back of some holomorphic function on $\C^{n-1}$. This implies
\[
\dim\O_{d}(\C^{n-1}\times\mathbb{CP}^{1}) = \dim\O_{d}(\C^{n-1}) = \binom{n-1+d}{d}.
\]
Inspired by this example, we may guess
\[
N_{3} = \binom{n-1+d}{d}.
\]
To prove that $M$ is biholomorphic to $\mathbb{C}^{n}$, thanks to the resolution of Yau's uniformization conjecture under the maximal volume growth assumption (solved independently by Liu \cite{Liu19} and by Lee-Tam \cite{LT20} as mentioned before), it suffices to show that $(M,\omega)$ has maximal volume growth under $\dim\mathcal{O}_{d}(M) \geq N_{3}$. In this direction, Liu \cite{Liu16b} proved the following result.

\begin{theorem}[Liu \cite{Liu16b}]\label{Liu result MVG}
Let $(M,\omega)$ be a complete $n$-dimensional K\"ahler manifold with non-negative holomorphic bisectional curvature. If there exists a constant $c>0$ such that
\[
\dim\mathcal{O}_{d}(M) \geq cd^{n} \, \ \text{for some sufficiently large $d$},
\]
then $(M,\omega)$ has maximal volume growth.
\end{theorem}

We improve Liu's result and obtain the second rigidity.

\begin{theorem}\label{maximal volume growth}
Let $(M,\omega)$ be a complete $n$-dimensional K\"ahler manifold with non-negative holomorphic bisectional curvature. If there exists an integer $d\geq1$ such that
\[
\dim\mathcal{O}_{d}(M) \geq N_{3}+1,
\]
then $(M,\omega)$ has maximal volume growth.
\end{theorem}

\begin{theorem}[Second Rigidity]\label{second rigidity}
Let $(M,\omega)$ be a complete $n$-dimensional K\"ahler manifold with non-negative holomorphic bisectional curvature. If there exists an integer $d\geq1$ such that
\[
\dim\mathcal{O}_{d}(M) \geq N_{3}+1,
\]
then $M$ is biholomorphic to $\mathbb{C}^{n}$.
\end{theorem}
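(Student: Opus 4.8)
The plan is to leverage the rigidity machinery behind Theorem \ref{dimension thm} together with the structural information coming from the almost-maximal dimension hypothesis $\dim\mathcal{O}_{d}(M)=N_{2}$. First I would recall that the proofs of Ni \cite{Ni04} and Chen-Fu-Yin-Zhu \cite{CFYZ06} produce, via a degree/transcendence-degree argument and the monotonicity of the heat equation or of the associated frequency, a controlled family of holomorphic functions of polynomial growth; the deficiency $\binom{n+d-2}{d-1}$ measures a ``rank drop'' in the natural map from $\mathcal{O}_d(M)$ to the space of degree-$\le d$ polynomials on the tangent cone at infinity (or, in Liu's language \cite{Liu16}, on the metric cone arising from Gromov-Hausdorff limits). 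I would show that $\dim\mathcal{O}_d(M)=N_2$ forces this rank drop to be exactly $\binom{n+d-2}{d-1}$, which in turn pins down the structure of the tangent cone: the cross-section must be such that the induced ring of polynomial-growth functions is the ``next-to-maximal'' one, and standard classification then identifies the tangent cone with $\mathbb{C}^{n}$ (or a metric cone whose complex structure is that of $\mathbb{C}^n$), matching the examples $(\mathbb{C}^n,\omega_\lambda)$ of Section \ref{examples}.

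Next I would run the embedding argument of Siu-Yau \cite{SY77} as sharpened by Ni and by Liu: because $\dim\mathcal{O}_1(M)\ge n$ is forced (one checks $N_2\ge$ the relevant bound for the degree-$1$ piece, using $d\ge 2$ to propagate down via multiplication of functions and the gap structure), the manifold carries $n$ algebraically independent linear-growth holomorphic functions, hence a holomorphic map $F:M\to\mathbb{C}^n$. The key analytic inputs are: (a) $M$ has at most Euclidean volume growth and, combined with non-negative bisectional curvature, the Cheeger-Colding-type structure theory applies; (b) the dimension estimate being near-maximal forces the volume growth to be exactly Euclidean (maximal volume growth), because a strict volume-ratio drop would, by the quantitative estimates in \cite{Ni04,CFYZ06}, cost strictly more than $\binom{n+d-2}{d-1}$ in dimension. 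Once maximal volume growth is established, I can invoke the resolution of Yau's conjecture in that case (Liu \cite{Liu19}, Lee-Tam \cite{LT20}): $M$ is biholomorphic to $\mathbb{C}^n$, and we are done.

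The main obstacle, and the step I would spend the most care on, is precisely the implication $\dim\mathcal{O}_d(M)=N_2 \Rightarrow$ maximal volume growth. The inequality $\dim\mathcal{O}_d(M)\le \dim\mathcal{O}_d(\mathbb{C}^n)$ in \cite{CFYZ06} is proved by relating $\dim\mathcal{O}_d(M)$ to the average of a heat-kernel/frequency quantity whose leading term is the volume density $\theta=\lim_{r\to\infty}\frac{\vol(B_r)}{\omega_{2n}r^{2n}}$; one has schematically $\dim\mathcal{O}_d(M)\le \theta\cdot\binom{n+d}{d}+(\text{lower order})$, but making the error term sharp enough to conclude $\theta=1$ from $\dim\mathcal{O}_d(M)=N_2$ rather than merely $\theta\in[1-\epsilon,1]$ is delicate and is where the integer-valued nature of $\dim\mathcal{O}_d(M)$ and the gap phenomenon of Theorem \ref{first rigidity} must be combined. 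I would handle this by first applying Theorem \ref{first rigidity} to rule out $\dim\mathcal{O}_d(M)>N_2$, then analyzing the extremal configuration: if $\theta<1$, the tangent cone at infinity splits (by the bisectional-curvature splitting theorems of Mok and the Cheeger-Gromoll-type argument) and a careful bookkeeping of how each Euclidean factor versus cone factor contributes to $\dim\mathcal{O}_d$ shows the deficit is a sum of terms each at least $\binom{n+d-2}{d-1}$ unless there is exactly one ``bad'' $\mathbb{C}$-factor replaced by a cone, which is exactly the $N_2$ case; in that case the cone is still biholomorphic (though not isometric) to $\mathbb{C}^n$. Combining this biholomorphism of tangent cones with the holomorphic map $F$ built above — and the fact that $F$ is proper and finite by the volume and growth estimates — yields that $F$ is a biholomorphism $M\cong\mathbb{C}^n$.
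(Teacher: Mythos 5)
Your approach is genuinely different from the paper's (which is elementary: the splitting theorem of Chen--Fu--Yin--Zhu applied inductively via Theorem \ref{splitting Ck}, injectivity of the Poincar\'e--Siegel map, and a hands-on analysis of deck transformations), but as written it has gaps that I do not think can be repaired along the lines you indicate. The central step --- that $\dim\O_{d}(M)=N_{2}$ forces the asymptotic volume ratio to be $1$, because ``a strict volume-ratio drop would cost strictly more than $\binom{n+d-2}{d-1}$ in dimension'' --- is contradicted by the paper's own examples in Section \ref{examples}: $(\C^{n},\omega_{\lambda})=(\C\times\C^{n-1},\omega_{\C,\lambda}+\omega_{\C^{n-1}})$ has asymptotic volume ratio $\lambda<1$ (the surface factor has geodesic balls of area $\sim\pi\lambda r^{2}$), yet it realizes $\dim\O_{d}=N_{2}$ exactly. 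Consequently no inequality of the schematic form $\dim\O_{d}(M)\leq\theta\cdot\binom{n+d}{d}+(\text{lower order})$ can be sharp enough to pin down $\theta$, and the known estimates in \cite{Ni04,CFYZ06,Liu16} do not take this form. The same examples defeat your claim that $n$ algebraically independent linear-growth functions are forced: for $\lambda\in(\frac{d-1}{d},1)$ one has $|z_{1}|\sim(\lambda r)^{1/\lambda}$ with $1/\lambda>1$, so $z_{1}\notin\O_{1}(\C^{n},\omega_{\lambda})$ and $\dim\O_{1}(\C^{n},\omega_{\lambda})=n$ (constants plus only $n-1$ coordinates); your map $F$ would land in $\C^{n-1}$.

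Even the weaker fallback --- establishing maximal volume growth only in the sense $\vol(B_{r})\geq cr^{2n}$ and then invoking \cite{Liu19} or \cite{LT20} --- would be a legitimate (if heavy) route, but the proposal supplies no actual argument for it: the ``bookkeeping'' of how cone factors of the tangent cone at infinity contribute to $\dim\O_{d}$ is asserted rather than proved, and what is really needed is a splitting of the manifold itself, not merely of its tangent cone; that is precisely what Theorem \ref{splitting Ck} accomplishes by producing a holomorphic function on the $Y$-factor with $\ord=\deg$, a construction absent from your outline. Finally, the fundamental group is not addressed: the hypothesis controls $\O_{d}(M)$ while the splitting machinery applies to the universal cover, and showing that $M$ is simply connected (Theorem \ref{simply connectedness} together with the Case 1/Case 2 analysis of deck transformations, using that $z_{i}^{d}$ lifts exactly and that a $\sigma$-invariant nonconstant function on $Y$ would violate Theorem \ref{1 dim ord deg inequality}) is a substantial part of the proof; the assertion that $F$ is ``proper and finite by the volume and growth estimates'' is not substantiated and cannot be, given that $F$ need not even have the right target dimension.
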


\begin{remark}
By the above example $(M_{0},\omega_{0})$, Theorem \ref{maximal volume growth} also gives the optimal dimension that ensures the maximal volume growth.
\end{remark}

\medskip

The paper is organized as follows: In Section \ref{preliminaries}, we collect some definitions and known results. In Section \ref{first rigidity section} and \ref{maximal volume growth and second rigidity section}, we prove Theorem \ref{first rigidity}, \ref{maximal volume growth} and \ref{second rigidity}. In Section \ref{examples}, some explicit examples are computed, which indicate that our main results are optimal. In Section \ref{questions section}, we pose some questions for further investigation.

\medskip

\subsection*{Acknowledgments}
The authors would like to thank Gang Liu and Kewei Zhang for useful discussions.
The first-named author was partially supported by National Key R\&D Program of China 2024YFA1014800 and 2023YFA1009900, and NSFC grants 12471052 and 12271008. The second-named author was partially supported by National Key R\&D Program of China 2023YFA1009900 and NSFC grant 12271008. The third- and fourth-named authors were partially supported by National Key R\&D Program of China 2023YFA1009900 and 2024YFA1014800, and NSFC grant 12471052.

\bigskip

\section{Preliminaries}\label{preliminaries}

In this section, we collect some definitions and known results that will be used later.

\subsection{Definitions}

\begin{definition}[Vanishing Order]
Let $(M,\omega)$ be a complete $n$-dimensional K\"ahler manifold. For any holomorphic function $f$ on $M$ and $p_{0}\in M$, the vanishing order of $f$ at $p_{0}$ is defined as
\[
\ord_{p_{0}}f = \max\{k\in \mathbb{N}:\nabla^{\alpha}f(x_{0})=0 \ \text{for any $|\alpha|\leq k-1$}\}.
\]
\end{definition}

\begin{definition}[Degree]
Let $(M,\omega)$ be a complete $n$-dimensional K\"ahler manifold and $f$ be a holomorphic function with polynomial growth. The degree of $f$ is defined as
\[
\deg f = \inf\{d\geq0:f\in\O_{d}(M)\}.
\]
\end{definition}

\begin{definition}[Poincar\'e-Siegel Map]
Let $(M,\omega)$ be a complete $n$-dimensional K\"ahler manifold, $p_{0}\in M$ and $z=(z_{1},\ldots,z_{n})$ be holomorphic coordinates centered at $p_{0}$. For any integer $d\geq1$, the Poincar\'e-Siegel map $P_{p_{0},d}:\O_{d}(M)\to\O_{d}(\C^{n})$ is defined as follows.
For any $f\in\O_{d}(M)$, $P_{p_{0},d}(f)$ is the polynomial obtained by truncating the Taylor expansion up to order $d$, i.e.
\[
f(z) = P_{p_{0},d}(f)(z)+O(|z|^{d+1}) \ \ \text{near $p_{0}$}.
\]
\end{definition}

\subsection{Previous results}

\begin{theorem}[Ni \cite{Ni04}, Chen-Fu-Yin-Zhu \cite{CFYZ06}, Liu \cite{Liu16a}]\label{Poincare-Siegel map injective}
Let $(M,\omega)$ be a complete $n$-dimensional K\"ahler manifold with non-negative holomorphic bisectional curvature. For any $p_{0}\in M$ and $f\in\O_{d}(M)$,
\begin{equation}\label{Poincare-Siegel map injective eqn 1}
\ord_{p_{0}}f \leq \deg f.
\end{equation}
In particular, the Poincar\'e-Siegel map $P_{p_{0},d}:\O_{d}(M)\to\O_{d}(\C^{n})$ is injective and so
\[
\dim\mathcal{O}_{d}(M) \leq \dim\mathcal{O}_{d}(\mathbb{C}^{n}).
\]
\end{theorem}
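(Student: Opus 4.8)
The plan is to derive everything from the single inequality $\ord_{p_0}f\le\deg f$, which is the real content of the statement. Granting it, suppose $f\in\O_d(M)$ lies in the kernel of $P_{p_0,d}$: then the Taylor expansion of $f$ at $p_0$ vanishes through order $d$, so $\ord_{p_0}f\ge d+1$, while $\deg f\le d$ since $f\in\O_d(M)$; the inequality then forces $d+1\le\ord_{p_0}f\le\deg f\le d$, so $f\equiv 0$. Hence $P_{p_0,d}\colon\O_d(M)\to\O_d(\C^n)$ is an injective linear map, and $\dim\O_d(M)\le\dim\O_d(\C^n)=\binom{n+d}{d}$.

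For the inequality itself I would invoke the \emph{three-circle theorem} on complete K\"ahler manifolds with non-negative holomorphic bisectional curvature — this is the one genuinely deep input and the place where the curvature hypothesis is used; it is due to Liu, with Ni's heat-kernel monotonicity providing an alternative route. It asserts that for any nonconstant $f\in\O(M)$ the function $g(s):=\log\big(\sup_{\overline B(p_0,e^s)}|f|\big)$ is convex on $\R$ (here $\sup_{\overline B(p_0,r)}|f|$ is finite for every $r$ because $\overline B(p_0,r)$ is compact by completeness, and positive for every $r>0$ since $f\not\equiv 0$). With this in hand the rest is elementary convexity bookkeeping. We may assume $k:=\ord_{p_0}f\ge 1$, since otherwise $\deg f\ge 0=\ord_{p_0}f$ trivially. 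Pick holomorphic coordinates $z$ centred at $p_0$; then $r(p_0,z)$ is comparable to $|z|$ for $|z|$ small, and near $p_0$ one has $f=P_k+O(|z|^{k+1})$ with $P_k$ a nonzero homogeneous polynomial of degree $k$. Evaluating $f$ along a ray on which $P_k\ne 0$ and comparing distances yields constants $0<c\le C$ and $r_0>0$ with $cr^k\le\sup_{\overline B(p_0,r)}|f|\le Cr^k$ for $0<r<r_0$; equivalently $g(s)=ks+O(1)$ for $s<\log r_0$. Thus $h(s):=g(s)-ks$ is convex and bounded above on the half-line $(-\infty,\log r_0)$, and a convex function bounded above on a left half-line is nondecreasing there; hence the right derivative satisfies $g'_+(s)\ge k$ for $s<\log r_0$, and since $g'_+$ is nondecreasing (convexity of $g$), in fact $g'_+(s)\ge k$ for all $s\in\R$.

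It remains to read off the behaviour at infinity. Fix any real $d'>\deg f$; then $f\in\O_{d'}(M)$, so $\sup_{\overline B(p_0,r)}|f|\le C'(1+r)^{d'}$ and hence $g(s)\le d's+C''$ for $s$ large. If $\lim_{s\to\infty}g'_+(s)$ exceeded $d'$, then $g$ would eventually grow with slope larger than $d'$, contradicting this bound; hence $k\le\lim_{s\to\infty}g'_+(s)\le d'$. Letting $d'\downarrow\deg f$ gives $\ord_{p_0}f=k\le\deg f$, as required.

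The only hard step is the three-circle theorem; after that one needs only the monotonicity of the right derivative of a convex function together with the elementary local lower bound $\sup_{\overline B(p_0,r)}|f|\gtrsim r^{\ord_{p_0}f}$ for small $r$. If one preferred to bypass the three-circle theorem, Ni's original frequency-type monotonicity formula for holomorphic functions establishes $\ord_{p_0}f\le\deg f$ directly and could be substituted for the whole argument.
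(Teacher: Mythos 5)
The paper does not prove this theorem; it is quoted as a known result (Subsection ``Previous results'') with the proof deferred to \cite{Ni04,CFYZ06,Liu16}, so there is no internal argument to compare against. Your write-up is a correct reconstruction of one of the cited routes, namely Liu's. The deduction of injectivity from $\ord_{p_0}f\le\deg f$ is exactly the standard one and is watertight: a kernel element of $P_{p_0,d}$ would have vanishing order at least $d+1$ but degree at most $d$. For the inequality itself, your convexity bookkeeping is sound: the local two-sided bound $cr^{k}\le\sup_{\overline B(p_0,r)}|f|\le Cr^{k}$ follows from the nonvanishing leading Taylor term together with $r(p_0,z)\asymp|z|$ near $p_0$; a convex function whose difference from $ks$ is bounded above on a left half-line must have right derivative at least $k$ there (otherwise the supporting line forces blow-up as $s\to-\infty$); monotonicity of $g'_+$ propagates the slope bound to all of $\R$; and the growth bound at infinity caps the slope by any $d'>\deg f$. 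Note that of the three citations only \cite{Liu16} proceeds via the three-circle theorem (and under the weaker hypothesis of nonnegative holomorphic \emph{sectional} curvature, which is implied by the bisectional hypothesis here), whereas \cite{Ni04} and \cite{CFYZ06} obtain the same inequality from a heat-equation monotonicity formula; your closing remark acknowledges this, so the attribution is handled correctly. I see no gap.
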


In \cite{CFYZ06}, Chen-Fu-Yin-Zhu actually established the following splitting theorem (see \cite[Section 3]{CFYZ06}).

\begin{theorem}[Chen-Fu-Yin-Zhu \cite{CFYZ06}]\label{splitting}
Let $(X,\omega)$ be a complete simply-connected $n$-dimensional K\"ahler manifold with non-negative holomorphic bisectional curvature. Suppose that there exists a non-trivial holomorphic function $f$ on $X$ such that $\ord_{x_{0}}f=\deg f$ for some $x_{0}\in X$, then $(X,\omega)$ is biholomorphic isometric to
\[
(Y^{n-1} \times \C,\omega_{Y}+\omega_{\C}),
\]
where $(Y^{n-1},\omega_{Y})$ is a complete simply-connected $(n-1)$-dimensional K\"ahler manifold with non-negative holomorphic bisectional curvature.
\end{theorem}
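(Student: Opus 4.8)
The plan is to prove Theorem~\ref{splitting} by induction on the dimension, extracting a Euclidean factor one step at a time. The starting point is the observation that since $f$ is holomorphic with $\ord_{x_0}f=\deg f=:d$, its leading-order Taylor polynomial $P_{x_0,d}(f)$ at $x_0$ is a nonzero homogeneous degree-$d$ polynomial in the holomorphic coordinates, so after a linear change of coordinates we may assume it involves the variable $z_1$ nontrivially. The key analytic input is that on a complete K\"ahler manifold with nonnegative holomorphic bisectional curvature, the degree of a holomorphic function controls its growth very rigidly; in particular, for a function whose vanishing order equals its degree, one expects the gradient estimate $|\nabla f|^2 \le C(1+r(x_0,\cdot))^{2(d-1)}$, and more importantly a Liouville-type statement forcing $|\nabla f|^2$ to be (after suitable normalization) a constant-rank object. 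Concretely, I would study the real function $u = \log|\nabla f|^2$ or the Hessian/Levi form of $|f|^{2/d}$-type quantities and apply the Bochner formula together with the curvature hypothesis to derive a differential inequality that, combined with the sharp growth bound coming from $\ord_{x_0}f=\deg f$, forces rigidity.

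The main structural step is to produce a parallel holomorphic vector field, equivalently a splitting of a $\C$-factor. The mechanism: from $f$ one manufactures a holomorphic function $h$ (for instance a $d$-th root extracted along the zero divisor of the leading term, or a limit/rescaling of $f$ under the asymptotic cone / tangent cone construction) that is itself of degree exactly $1$ with $\ord_{x_0}h = 1$, i.e. $dh$ is nonvanishing and the estimate saturates. For a degree-one holomorphic function $h$ with $\ord_{x_0}h=1$ on such a manifold, the Bochner formula applied to $|\nabla h|^2$ yields
\[
\Delta |\nabla h|^2 = |\nabla\nabla h|^2 + |\nabla\dbar h|^2 - R(\nabla h, \ov{\nabla h}) \ge -R(\nabla h,\ov{\nabla h}) \ge 0,
\]
using holomorphicity ($\dbar h=0$ so $\nabla\dbar h$ is controlled) and $\Ric\ge 0$ from nonnegative bisectional curvature; since $|\nabla h|^2$ is a bounded-growth subharmonic (indeed of at most polynomial, actually bounded, growth after normalization) function, a Cheng--Yau / Yau Liouville theorem forces $|\nabla h|^2$ to be constant, hence $\nabla\nabla h = 0$ and $R(\nabla h,\cdot)=0$. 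A parallel $(1,0)$ vector field $V = \nabla h$ with $|V|$ constant gives a local isometric splitting $M \cong N \times \C$ by de~Rham-type decomposition, and simple connectedness globalizes it; the $\C$-factor is flat and the holomorphic coordinate on it is (a multiple of) $h$.

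After the splitting $X \cong Y \times \C$ is obtained, one checks that $Y$ inherits completeness, simple connectedness, and nonnegative holomorphic bisectional curvature (curvature of a product is the direct sum, and the $\C$-factor contributes zero), which is exactly the inductive hypothesis; the base case $n=1$ is immediate since a one-dimensional complete simply-connected K\"ahler manifold admitting a nonconstant polynomial-growth holomorphic function whose order equals its degree must be $\C$ itself (by the uniformization dichotomy, it is $\C$ or $\mathbb{D}$, and $\mathbb{D}$ carries no nonconstant holomorphic function of polynomial growth with matching order). I expect the hard part to be the passage from ``$f$ with $\ord_{x_0}f=\deg f$'' to ``a degree-one $h$ with $\nabla h$ parallel'': one must show that the leading homogeneous piece of $f$ genuinely propagates to a global factorization, which requires the sharp growth estimate (Theorem~\ref{Poincare-Siegel map injective} type bounds) to prevent the ``extra'' directions from curving, i.e. to rule out that $|\nabla f|^2$ grows faster than the flat model along the fiber directions. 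Handling the possibility that the leading polynomial is a higher power of a linear form versus a genuine degree-$d$ polynomial, and ensuring the extracted root is globally single-valued (here simple connectedness is used again), is the delicate bookkeeping that the curvature and Liouville inputs must be arranged to control.
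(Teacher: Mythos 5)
First, a point of order: the paper does not prove Theorem \ref{splitting} at all --- it is quoted as a black box from \cite[Section 3]{CFYZ06} --- so there is no internal proof to compare against, and your proposal has to be measured against the argument in the cited reference. Measured that way, it has a genuine gap at its central step, namely the passage from the given $f$ with $\ord_{x_{0}}f=\deg f=d$ to a degree-one holomorphic function $h$ with parallel, nowhere-vanishing gradient. You propose to ``manufacture'' $h$ either by extracting a $d$-th root of the leading homogeneous term or by a tangent-cone rescaling, but neither recipe works: a homogeneous polynomial such as $z_{1}z_{2}$ admits no single-valued square root (simple connectedness of $X$ does not help, since the obstruction lives on the complement of the zero divisor), and blow-down constructions require extra hypotheses (e.g.\ maximal volume growth) not available here and in any case produce objects on a limit space rather than on $X$. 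This reduction is not a preprocessing step; it is essentially the content of the theorem. The actual proof in \cite{CFYZ06} (building on \cite{Ni04}) never produces such an $h$: it runs the heat-equation deformation of $\log|f|^{2}$ and Ni's Li--Yau--Hamilton-type monotonicity formula, shows that $\ord_{x_{0}}f=\deg f$ forces equality in the monotonicity, and extracts from the equality case a parallel null direction of the complex Hessian, which yields the de Rham splitting.

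The Liouville step is also incorrect as stated. Even granting a degree-one $h$, the Bochner identity and the Cheng--Yau gradient estimate give only that $|\partial h|^{2}$ is a bounded subharmonic function, and bounded subharmonic functions on complete manifolds with nonnegative Ricci curvature need not be constant (on $\R^{m}$, $m\geq 3$, the function $\max\{-|x|^{2-m},-1\}$ is bounded, subharmonic and nonconstant); Yau's and Cheng--Yau's Liouville theorems concern harmonic functions. To close this step one must use the full bisectional curvature hypothesis to see that $|\partial h|^{2}$ is plurisubharmonic and then invoke the Ni--Tam Liouville-type theorem for plurisubharmonic functions of slow growth --- whose proof is itself a heat-flow and monotonicity argument, i.e.\ the very machinery your outline tries to avoid. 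Your base case $n=1$ and the observation that $Y$ inherits completeness, simple connectedness and nonnegative bisectional curvature are fine, but without a correct mechanism for producing the parallel $(1,0)$-vector field the induction does not start.
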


\subsection{Lifted function}

\begin{lemma}\label{lift}
Let $(M,\omega)$ be a complete $n$-dimensional K\"ahler manifold, $(\ti{M},\ti{\omega})$ be the universal cover and $\pi:\ti{M}\to M$ be the projection map. For any function $f$ on $M$, its lifted function on $\ti{M}$ is defined as $\ti{f}=f\circ\pi$.
\begin{enumerate}\setlength{\itemsep}{1mm}
\item[(i)] $\ti{f}$ is $\pi_{1}(M)$-invariant, i.e. $\ti{f}=\ti{f}\circ\sigma$ for any deck transformation $\sigma\in\pi_{1}(M)$.
\item[(ii)] If $f\in\O_{d}(M)$, then $\ti{f}\in\O_{d}(\ti{M})$.
\end{enumerate}
\end{lemma}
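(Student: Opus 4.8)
The plan is to verify the two assertions directly from the defining properties of the universal covering map, which is simultaneously a local biholomorphism and a local isometry once $\ti{M}$ is equipped with $\ti{\omega}=\pi^{*}\omega$. Note in particular that $(\ti{M},\ti{\omega})$ is then again a complete K\"ahler manifold (completeness of $M$ lifts to $\ti{M}$ because $\pi$ is a Riemannian covering), so that Definition \ref{defs O and Od} makes sense for $\ti{f}$ on $\ti{M}$.

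For (i), I would recall that every deck transformation $\sigma\in\pi_{1}(M)$ is a covering transformation, hence satisfies $\pi\circ\sigma=\pi$ by definition. Composing $f$ on both sides gives $\ti{f}\circ\sigma=f\circ\pi\circ\sigma=f\circ\pi=\ti{f}$, which is exactly the claimed invariance. This step is immediate.

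For (ii), I would argue in two steps. First, holomorphicity: since $\pi$ is a local biholomorphism and $f$ is holomorphic on $M$, the composition $\ti{f}=f\circ\pi$ is holomorphic on $\ti{M}$, which is a local statement checked in charts. Second, the polynomial growth bound: the key point is that $\pi$ is distance non-increasing. Indeed, any piecewise smooth path $\ti{\gamma}$ in $\ti{M}$ joining $\ti{p}$ to $\ti{q}$ projects to a path $\pi\circ\ti{\gamma}$ in $M$ from $\pi(\ti{p})$ to $\pi(\ti{q})$ of the same length, since $\pi$ is a local isometry; taking the infimum over such $\ti{\gamma}$ yields $r_{M}(\pi(\ti{p}),\pi(\ti{q}))\leq r_{\ti{M}}(\ti{p},\ti{q})$. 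Now fix $p_{0}\in M$ realizing $f\in\O_{d}(M)$, pick $\ti{p}_{0}\in\pi^{-1}(p_{0})$, and let $C$ be such that $|f(p)|\leq C(1+r(p_{0},p))^{d}$ on $M$. Then for every $\ti{p}\in\ti{M}$,
\[
|\ti{f}(\ti{p})|=|f(\pi(\ti{p}))|\leq C\big(1+r_{M}(p_{0},\pi(\ti{p}))\big)^{d}\leq C\big(1+r_{\ti{M}}(\ti{p}_{0},\ti{p})\big)^{d},
\]
so $\ti{f}\in\O_{d}(\ti{M})$, completing the proof.

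I do not expect any genuine obstacle here: the statement is foundational and every step is routine. The only points deserving a line of care are the distance non-increasing property of $\pi$ recorded above and the observation that $(\ti{M},\ti{\omega})$ inherits completeness, so that the membership $\ti{f}\in\O_{d}(\ti{M})$ is well posed.
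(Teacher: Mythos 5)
Your proof is correct and follows essentially the same route as the paper: part (i) via $\pi\circ\sigma=\pi$, and part (ii) via the distance non-increasing property $r_{M}(\pi(\ti{p}_{0}),\pi(\ti{p}))\leq r_{\ti{M}}(\ti{p}_{0},\ti{p})$ of the covering projection. The extra remarks on holomorphicity, completeness of $(\ti{M},\ti{\omega})$, and the path-lifting justification of the distance inequality are fine but are left implicit in the paper.
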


\begin{proof}
(i) follows from $\pi\circ\sigma=\pi$ for any deck transformation $\sigma\in\pi_{1}(M)$. For (ii), Since $f\in\O_{d}(M)$, then
\[
|f(p)| \leq C\big(1+r(p_{0},p)\big)^{d}
\]
for some constant $C$. Fix $\ti{p}_{0}\in\pi^{-1}(p_{0})$. For any $\ti{p}\in\ti{M}$, we see that
\[
|\ti{f}(\ti{p})| = |f(\pi(\ti{p}))| \leq C\big(1+r(p_{0},\pi(\ti{p}))\big)^{d}
= C\big(1+r(\pi(\ti{p}_{0}),\pi(\ti{p}))\big)^{d}.
\]
Since
\[
r(\pi(\ti{p}_{0}),\pi(\ti{p})) \leq r(\ti{p}_{0},\ti{p}),
\]
then
\[
|\ti{f}(\ti{p})| \leq C\big(1+r(\ti{p}_{0},\ti{p})\big)^{d},
\]
which implies $\ti{f}\in\O_{d}(\ti{M})$.
\end{proof}

\subsection{Classical results on $\C^{n}$}

\begin{theorem}[Classical Liouville Theorem on $\C^{n}$]\label{classical Liouville theorem}
If $f\in\O_{d}(\C^{n})$ for some integer $d\geq0$, then $f$ is a polynomial with degree at most $d$.
\end{theorem}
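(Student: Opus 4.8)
The plan is to reduce the multivariable statement to the classical one-variable Liouville theorem via Cauchy estimates for the Taylor coefficients at the origin. First I would expand $f$ in its (globally convergent, since $f$ is entire on $\C^n$) Taylor series $f(z) = \sum_{\alpha} c_\alpha z^\alpha$, where the sum runs over all multi-indices $\alpha \in \mathbb{N}^n$. The goal is to show $c_\alpha = 0$ whenever $|\alpha| \geq d+1$; this immediately gives that $f$ is a polynomial of degree at most $d$.

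To estimate $c_\alpha$, I would use the iterated Cauchy integral formula: for $R > 0$,
\[
c_\alpha = \frac{1}{(2\pi \mn)^n} \int_{|z_1| = R} \cdots \int_{|z_n| = R} \frac{f(z)}{z^{\alpha + \mathbf{1}}} \, dz_1 \cdots dz_n,
\]
where $\mathbf{1} = (1,\ldots,1)$. Taking the polydisc of polyradius $(R,\ldots,R)$ and using the growth bound $|f(z)| \leq C(1 + r(0,z))^d = C(1+|z|)^d$ (the Euclidean distance from the origin), together with $|z| \leq \sqrt{n}\,R$ on that polydisc, yields
\[
|c_\alpha| \leq \frac{C(1 + \sqrt{n}\,R)^d}{R^{|\alpha|}}.
\]
Letting $R \to \infty$, the right-hand side tends to $0$ as soon as $|\alpha| > d$, so $c_\alpha = 0$ for all $|\alpha| \geq d+1$. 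Hence $f(z) = \sum_{|\alpha| \leq d} c_\alpha z^\alpha$ is a polynomial of degree at most $d$.

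Alternatively, one can avoid the $n$-fold Cauchy formula by slicing: for any fixed unit vector $v \in \C^n$ the restriction $g(\zeta) = f(\zeta v)$ is entire on $\C$ with $|g(\zeta)| \leq C(1+|\zeta|)^d$, so by the one-variable Liouville theorem (Theorem \ref{classical Liouville theorem} in dimension one, which is standard) $g$ is a polynomial of degree $\leq d$ in $\zeta$; letting $v$ vary and comparing homogeneous parts forces the degree-$k$ part of $f$ to vanish for $k > d$. There is no real obstacle here — the only point requiring a modicum of care is the bookkeeping of multi-indices and the constant $\sqrt{n}$ relating the polydisc radius to the Euclidean norm; everything else is the classical Cauchy-estimate argument.
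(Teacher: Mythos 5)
Your proof is correct. The paper states this result as a classical fact in its preliminaries and gives no proof of its own, so there is nothing to compare against; your Cauchy-estimate argument on the polydisc (with the $\sqrt{n}$ factor relating the polyradius to the Euclidean norm handled correctly) is the standard derivation, and the slicing alternative via the one-variable Liouville theorem and homogeneous components is equally valid.
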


\begin{theorem}[Biholomorphic Isometry of $\C^{n}$]\label{biholomorphic isometry group of Cn}
If $\sigma$ is a biholomorphic isometry of $\C^{n}$, then it can be written as $\sigma(z)=Az+c$ where $A$ is a unitary matrix and $c\in\C^{n}$.
\end{theorem}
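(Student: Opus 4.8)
The plan is to show that the holomorphic Jacobian of $\sigma$ is a constant unitary matrix, and then integrate. Write $\sigma = (\sigma_{1}, \ldots, \sigma_{n})$ in the standard coordinates $z = (z_{1}, \ldots, z_{n})$ on $\C^{n}$, and let $J\sigma(z) = \big( \tfrac{\partial \sigma_{i}}{\partial z_{j}}(z) \big)_{1 \leq i, j \leq n}$ denote its holomorphic Jacobian; since $\sigma$ is holomorphic, each entry of $J\sigma$ is an entire function on $\C^{n}$.

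First I would observe that $J\sigma(z)$ is unitary for every $z \in \C^{n}$. Because $\sigma$ is holomorphic, its real differential $d\sigma_{z}$ is complex linear and is represented, under the identification $\C^{n} \cong \R^{2n}$, by the matrix $J\sigma(z)$. The hypothesis that $\sigma$ is an isometry of $(\C^{n}, \omega_{\C^{n}})$ means precisely that each $d\sigma_{z}$ preserves the standard Euclidean inner product, i.e. is orthogonal; a complex linear orthogonal transformation is exactly a unitary one (it automatically preserves the Hermitian inner product as well, since it commutes with multiplication by $\sqrt{-1}$), so $J\sigma(z)^{*} J\sigma(z) = I$ for all $z$. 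Equivalently, one can pull back the Hermitian form $\sum_{j} dz_{j} \otimes d\bar{z}_{j}$ directly and read off the identity $\sum_{i} \tfrac{\partial \sigma_{i}}{\partial z_{j}} \overline{\tfrac{\partial \sigma_{i}}{\partial z_{k}}} = \delta_{jk}$, the holomorphy of $\sigma$ being what kills the remaining $(2,0)$ and $(0,2)$ pieces.

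Next I would use unitarity to conclude that $J\sigma$ is constant: it gives $\big| \tfrac{\partial \sigma_{i}}{\partial z_{j}}(z) \big| \leq 1$ for all $z$ and all $i, j$, so by the classical Liouville theorem on $\C^{n}$ (Theorem \ref{classical Liouville theorem} with $d = 0$) each $\tfrac{\partial \sigma_{i}}{\partial z_{j}}$ is a constant. Hence $J\sigma \equiv A$ for a fixed matrix $A$, which is unitary by the previous step. Finally, the holomorphic map $z \mapsto \sigma(z) - Az$ has identically vanishing Jacobian, hence is a constant $c \in \C^{n}$, and therefore $\sigma(z) = Az + c$ with $A$ unitary, as claimed.

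There is no genuine obstacle here --- the statement is classical and the argument collapses to two short steps once the right viewpoint is adopted. The only mildly delicate point is the passage from ``isometry'' to ``the holomorphic Jacobian is unitary'', which is exactly where complex linearity of the differential (that is, holomorphy of $\sigma$) is used; note that invertibility of $\sigma$ never enters the argument, since unitarity of the limiting matrix $A$ supplies it automatically.
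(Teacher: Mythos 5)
Your argument is correct. Note that the paper states this result without proof, listing it among the ``classical results on $\C^{n}$,'' so there is no proof in the paper to compare against; your write-up supplies the standard argument. Each step checks out: holomorphy of $\sigma$ makes $d\sigma_{z}$ complex linear and identifies it with the holomorphic Jacobian $J\sigma(z)$; a complex linear map preserving the real Euclidean inner product automatically preserves the full Hermitian inner product (since $\mathrm{Im}\langle u,v\rangle=\mathrm{Re}\langle -\sqrt{-1}\,u,v\rangle$ and $d\sigma_{z}$ commutes with $\sqrt{-1}$), so $J\sigma(z)$ is unitary pointwise; unitarity bounds every entry by $1$, so Theorem \ref{classical Liouville theorem} with $d=0$ forces each entry to be constant; and integrating gives $\sigma(z)=Az+c$. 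Your closing observation that invertibility is never used is also accurate.
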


\subsection{Some spaces}\label{Some spaces}
Let $p,q,d\geq0$ be three integers. Write $z\in\C^{p}$ and $x\in\R^{q}$. Define three spaces by
\[
\mathcal{H}_{d}(\R^{q}) = \{\text{complex harmonic polynomials on $\R^{q}$ with degree at most $d$}\}, \\[0.6mm]
\]
\[
\mathcal{P}_{d}(\C^{p}\times\R^{q}) = \{\text{complex polynomials of $(z,x)$ with degree at most $d$}\}, \\[1.2mm]
\]
\[
\mathcal{E}_{d}(\C^{p}\times\R^{q})
= \{f\in\mathcal{P}_{d}(\C^{p}\times\R^{q}):\text{$f(z,\cdot)\in\mathcal{H}_{d}(\R^{q})$ for any $z\in\C^{p}$}\}.
\]

\begin{lemma}\label{dimension of E}
The dimension of space $\mathcal{E}_{d}(\C^{p}\times\R^{q})$ is
\[
\begin{split}
\dim\mathcal{E}_{d}(\C^{p}\times\R^{q})
= {} & \binom{p+q+d}{d}-\binom{p+q+d-2}{d-2} \\[1mm]
= {} & \binom{p+q+d}{p+q}-\binom{p+q+d-2}{p+q}.
\end{split}
\]
\end{lemma}

\begin{proof}
It is well-known that
\[
\dim\mathcal{P}_{d}(\C^{p}\times\R^{q}) = \binom{p+q+d}{d}.
\]
When $d=0,1$, we have $\mathcal{E}_{d}(\C^{p}\times\R^{q})=\mathcal{P}_{d}(\C^{p}\times\R^{q})$ and then Lemma \ref{dimension of E} follows. Next we assume that $d\geq2$. Let $\Delta_{x}$ be the Laplacian operator in $\R^{q}$. We claim that
\begin{equation}\label{dimension of E claim}
\text{$\Delta_{x}:\mathcal{P}_{d}(\C^{p}\times\R^{q})\to\mathcal{P}_{d-2}(\C^{p}\times\R^{q})$ is surjective}.
\end{equation}
Give this claim, it is clear that
\[
\begin{split}
\dim\mathcal{E}_{d}(\C^{p}\times\R^{q}) = {} & \dim\mathrm{Ker}\,\Delta_{x} \\[2.6mm]
= {} & \dim\mathcal{P}_{d}(\C^{p}\times\R^{q})-\dim\mathcal{P}_{d-2}(\C^{p}\times\R^{q}) \\[1.2mm]
= {} & \binom{p+q+d}{d}-\binom{p+q+d-2}{d-2},
\end{split}
\]
as required.

\medskip

Now we prove the claim \eqref{dimension of E claim}. Define
\[
T: \mathcal{P}_{d-2}(\C^{p}\times\R^{q}) \to \mathcal{P}_{d-2}(\C^{p}\times\R^{q}), \ \ \
Tf = \Delta_{x}\Big((1-|x|^{2})f\Big).
\]
For any $f\in\mathrm{Ker}\,T$ and $z\in\C^{p}$, we set $f_{z}(x)=f(z,x)$ and so
\[
\Delta_{x}\Big((1-|x|^{2})f_{z}\Big) = 0 \, \ \text{in $\R^{q}$}.
\]
It then follows that $(1-|x|^{2})\cdot\mathrm{Re}f_{z}$ is a real harmonic function on $\R^{q}$ with
\[
(1-|x|^{2})\cdot\mathrm{Re}f_{z} \equiv 0 \, \ \text{on $\de B_{1}$}.
\]
By the maximum principle,
\[
(1-|x|^{2})\cdot\mathrm{Re}f_{z} \equiv 0 \, \ \text{in $B_{1}$}.
\]
Since any harmonic function is analytic, then
\[
(1-|x|^{2})\cdot\mathrm{Re}f_{z} \equiv 0 \, \ \text{in $\R^{q}$},
\]
and so $\mathrm{Re}f_{z}\equiv0$ in $\R^{q}$. Similarly, we have $\mathrm{Im}f_{z}\equiv0$ in $\R^{q}$. Then $f_{z}\equiv0$ for any $z\in\C^{p}$ and so $f\equiv0$. This shows $\mathrm{Ker}\,T=\{0\}$. Combining this with $\dim\mathcal{P}_{d-2}(\C^{p}\times\R^{q})<\infty$, the map $T$ is bijective, which implies \eqref{dimension of E claim}.
\end{proof}

\begin{remark}\label{dimension of H}
When $p=0$ and $q=n$, we have $\E_{d}(\R^{n})=\H_{d}(\R^{n})$ and then Lemma \ref{dimension of E} shows
\[
\dim\H_{d}(\R^{n})
= \binom{n+d}{d}-\binom{n+d-2}{d-2}
= \binom{n+d}{n}-\binom{n+d-2}{n}.
\]
\end{remark}

\begin{remark}\label{dimension upper bound of E and H}
Lemma \ref{dimension of E} and Remark \ref{dimension of H} show that
\begin{enumerate}\setlength{\itemsep}{1.2mm}
\item[(i)] $\dim\H_{d}(\R^{n})\leq C(n)d^{n-1}$ for some constant $C(n)$;
\item[(ii)] $\dim\mathcal{E}_{d}(\C^{p}\times\R^{q}) \leq C(p,q)d^{p+q-1}$ for some constant $C(p,q)$.
\end{enumerate}
\end{remark}

\section{First rigidity}\label{first rigidity section}

In Subsection \ref{splitting theorem subsection} and \ref{simply connectedness subsection}, we will investigate the properties of the underlying manifold when $\dim\O_{d}(M)$ is sufficiently large. The proof of Theorem \ref{first rigidity} (i.e. first rigidity) will be given in Subsection \ref{proof of first rigidity subsection}.

\subsection{Splitting theorem}\label{splitting theorem subsection}

In this subsection, we will establish a splitting theorem, which is a generalization of Theorem \ref{splitting}.

\begin{theorem}\label{splitting Ck}
Let $(X^{n},\omega)=(Y^{n-k}\times\C^{k},\omega_{Y}+\omega_{\C^{k}})$, where $0\leq k\leq n-1$ and $(Y^{n-k},\omega_{Y})$ be a complete simply-connected $(n-k)$-dimensional K\"ahler manifold with non-negative holomorphic bisectional curvature. Suppose that there exists an integer $d\geq1$ such that
\[
\dim\mathcal{O}_{d}(X)\geq \dim\mathcal{O}_{d-1}(\mathbb{C}^{n})+\binom{k+d-1}{d}+1.
\]
Then $(Y^{n-k},\omega_{Y})$ is biholomorphic isometric to
\[
(Z^{n-k-1}\times\C,\omega_{Z}+\omega_{\C}),
\]
where $(Z^{n-k-1},\omega_{Z})$ be a complete simply-connected $(n-k-1)$-dimensional K\"ahler manifold with non-negative holomorphic bisectional curvature. In particular, $(X^{n},\omega)$ is biholomorphic isometric to
\[
(Z^{n-k-1}\times\C^{k+1},\omega_{Z}+\omega_{\C^{k+1}}).
\]
\end{theorem}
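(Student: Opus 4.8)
The plan is to exploit the product structure $X = Y^{n-k}\times\C^k$ to decompose $\O_d(X)$ according to how holomorphic functions depend on the $\C^k$-factor, and then feed a suitable holomorphic function on $Y$ into the splitting theorem (Theorem \ref{splitting}). First I would expand any $f\in\O_d(X)$ as a power series in the Euclidean coordinates $w=(w_1,\dots,w_k)$ on the $\C^k$ factor: $f(y,w)=\sum_{\alpha} g_\alpha(y)\,w^\alpha$, where each $g_\alpha$ is holomorphic on $Y$. A polynomial-growth bound on $f$ forces each $g_\alpha$ to lie in $\O_{d-|\alpha|}(Y)$, and moreover only multi-indices with $|\alpha|\le d$ can appear (one recovers $g_\alpha$ by differentiating in $w$ and restricting, and the growth estimate on $Y$-slices controls the degree). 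This gives an injection
\[
\O_d(X)\hookrightarrow \bigoplus_{|\alpha|\le d}\O_{d-|\alpha|}(Y),
\]
so in particular $\dim\O_d(X)\le \sum_{j=0}^d \binom{k+d-j-1}{k-1}\dim\O_j(Y)$ (the binomial counts multi-indices $\alpha$ with $|\alpha|=d-j$).

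Next I would run a counting/pigeonhole argument. Suppose for contradiction that $Y$ does \emph{not} split off a $\C$ factor. By Theorem \ref{splitting} (applied to the simply-connected manifold $Y$), this means there is \emph{no} non-trivial holomorphic function $h$ on $Y$ with $\ord_{y_0}h=\deg h$ for any $y_0\in Y$; equivalently, the Poincaré–Siegel map on $Y$ is not just injective but \emph{strictly} drops dimension in each degree. Concretely, for every $j\ge 1$ the inequality $\ord_{y_0}h\le \deg h$ of Theorem \ref{Poincare-Siegel map injective} can never be an equality, so the image of $P_{y_0,j}:\O_j(Y)\to\O_j(\C^{n-k})$ misses the ``top" homogeneous part unless the function already has lower order — iterating this, one gets $\dim\O_j(Y)\le \dim\O_{j-1}(\C^{n-k})+\binom{n-k+j-1}{j}-1$ for $j\ge 1$, where $\binom{n-k+j-1}{j}$ is the number of degree-exactly-$j$ monomials on $\C^{n-k}$; this is the same ``$-1$" gain used in the statement of Theorem \ref{splitting Ck} itself. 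Plugging this bound for $\dim\O_j(Y)$ into the displayed sum and carefully collecting the binomial identities — using Vandermonde-type convolutions $\sum_j \binom{k+d-j-1}{k-1}\binom{n-k+j-1}{j}=\binom{n+d-1}{d}=\dim\O_{d-1}(\C^n)$ and $\sum_j\binom{k+d-j-1}{k-1}=\binom{k+d}{d}$, the latter feeding the $\binom{k+d-1}{d}$ term after separating the $j=0$ contribution — yields
\[
\dim\O_d(X)\le \dim\O_{d-1}(\C^n)+\binom{k+d-1}{d},
\]
contradicting the hypothesis $\dim\O_d(X)\ge \dim\O_{d-1}(\C^n)+\binom{k+d-1}{d}+1$. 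Hence $Y$ does split: $(Y,\omega_Y)\cong(Z^{n-k-1}\times\C,\omega_Z+\omega_\C)$ with $Z$ again complete, simply-connected, and of non-negative holomorphic bisectional curvature (simple connectivity of $Z$ follows since $Y$ is simply connected and $Y\cong Z\times\C$). The final ``in particular" is then just reassociating $X\cong Z\times\C\times\C^k = Z\times\C^{k+1}$.

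I would carry the steps in this order: (1) the power-series decomposition of $\O_d(X)$ along $\C^k$ and the resulting injection into $\bigoplus_{|\alpha|\le d}\O_{d-|\alpha|}(Y)$; (2) establish the strict degree-drop bound $\dim\O_j(Y)\le\dim\O_{j-1}(\C^{n-k})+\binom{n-k+j-1}{j}-1$ for all $j\ge1$ \emph{under the assumption that $Y$ does not $\C$-split}, via Theorem \ref{splitting} and Theorem \ref{Poincare-Siegel map injective}; (3) combine (1) and (2) with the binomial identities to reach the dimension bound for $\O_d(X)$ and contradict the hypothesis; (4) apply Theorem \ref{splitting} to conclude $Y$ splits, check simple connectivity of $Z$ and non-negativity of its curvature, and reassociate the product.

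The main obstacle I anticipate is step (2): turning the pointwise inequality $\ord_{y_0}h\le\deg h$ (which alone only gives injectivity of the Poincaré–Siegel map, i.e. the \emph{non-strict} bound $\dim\O_j(Y)\le\dim\O_j(\C^{n-k})$) into the \emph{strict} improved bound with the $-1$ and the shift to degree $j-1$. The clean way is: if $Y$ does not $\C$-split, Theorem \ref{splitting} says no nonzero $h$ achieves $\ord_{y_0}h=\deg h$, so for any fixed basepoint $y_0$ and any $h\in\O_j(Y)$ the top-degree term of its Taylor expansion at $y_0$ must vanish (otherwise $\ord_{y_0}h=\deg h=j$, wait — one needs $\deg h = j$ exactly; functions of lower degree are handled inductively). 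Making this induction on $j$ precise — peeling off the top homogeneous component, arguing that the Poincaré–Siegel image lands in $\O_{j-1}(\C^{n-k})\oplus(\text{proper subspace of degree-}j\text{ part})$ and that the loss is at least $1$ — is where the real care is needed; everything else is bookkeeping with binomial coefficients.
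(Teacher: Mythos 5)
Your overall strategy (decompose $\O_d(X)$ along the $\C^k$-factor, pass to the contrapositive, and turn ``$Y$ does not $\C$-split'' into a dimension bound for each $\O_j(Y)$ via Theorem \ref{splitting}) is a legitimate dual of the paper's argument, and step (1) is sound: the injection $\O_d(X)\hookrightarrow\bigoplus_{|\alpha|\le d}\O_{d-|\alpha|}(Y)$ is exactly what the paper establishes in its Steps 2--4 (by slice-wise Liouville plus a Vandermonde/Cramer argument; your Cauchy-estimate route to $g_\alpha\in\O_{d-|\alpha|}(Y)$ works equally well). The genuine gap is in step (2): the bound you state, $\dim\O_j(Y)\le\dim\O_{j-1}(\C^{n-k})+\binom{n-k+j-1}{j}-1=\dim\O_j(\C^{n-k})-1$, is too weak to close the argument except when $k=n-1$. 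Feeding ``$\dim\O_j(\C^{n-k})-1$'' into your convolution gives only
\[
\dim\O_d(X)\ \le\ \dim\O_d(\C^n)-\binom{k+d-1}{d-1}\ =\ \dim\O_{d-1}(\C^n)+\binom{n+d-1}{d}-\binom{k+d-1}{d-1},
\]
and since $\binom{n+d-1}{d}\ge\binom{k+d}{d}=\binom{k+d-1}{d-1}+\binom{k+d-1}{d}$ (strictly for $k<n-1$), this does \emph{not} contradict the hypothesis $\dim\O_d(X)\ge\dim\O_{d-1}(\C^n)+\binom{k+d-1}{d}+1$. The case $k=0$ makes the failure plain: there your bound is just $\dim\O_d(Y)\le\dim\O_d(\C^n)-1$, while you need $\dim\O_d(Y)\le\dim\O_{d-1}(\C^n)$. (Separately, your identity $\sum_j\binom{k+d-j-1}{k-1}\binom{n-k+j-1}{j}=\binom{n+d-1}{d}$ is correct, but $\binom{n+d-1}{d}$ is the number of degree-$d$ monomials, not $\dim\O_{d-1}(\C^n)=\binom{n+d-1}{d-1}$.)

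The fix is the full-codimension version of the observation you gesture at in your ``main obstacle'' paragraph: if $Y$ does not $\C$-split, then the image of $P_{y_0,j}:\O_j(Y)\to\O_j(\C^{n-k})$ meets the subspace $H_j$ of homogeneous degree-$j$ polynomials only in $0$ --- for if $P_{y_0,j}(h)\in H_j\setminus\{0\}$ then $\ord_{y_0}h\ge j\ge\deg h\ge\ord_{y_0}h$, so $h$ is non-constant with $\ord_{y_0}h=\deg h$ and Theorem \ref{splitting} splits $Y$. Transversality with $H_j$ gives the loss of the \emph{entire} top stratum, $\dim\O_j(Y)\le\dim\O_j(\C^{n-k})-\binom{n-k+j-1}{j}=\dim\O_{j-1}(\C^{n-k})$, not merely a loss of $1$; with this bound your convolution yields exactly $\dim\O_d(X)\le\dim\O_{d-1}(\C^n)+\binom{k+d-1}{d}$ and the contradiction goes through. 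With that correction your proof is valid and genuinely different in organization from the paper's: the paper runs the argument forwards, using the dimension hypothesis once to produce a single $f$ with $\ord_{(y_0,0)}f=d$ whose top jet involves $y$, and then extracts one coefficient $f_\alpha\in\O_{d-|\alpha|}(Y)$ with $\ord_{y_0}f_\alpha=\deg f_\alpha\ge1$; your version instead bounds every summand $\O_j(Y)$ and aggregates. The paper's route avoids the binomial convolutions entirely, while yours makes the quantitative mechanism (where each unit of dimension is lost) more transparent.
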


\begin{proof}
Fix a point $y_{0}\in Y$, and let $y=(y_{1},\ldots,y_{n-k})$ be the holomorphic coordinates centered at $y_{0}$ and write $z=(z_{1},\ldots,z_{k})\in\C^{k}$. By Theorem \ref{splitting}, it suffices to construct a non-trivial holomorphic function $f_{Y}$ on $Y$ such that $\ord_{y_{0}}f_{Y}=\deg f_{Y}$. We split the argument into five steps.

\bigskip
\noindent
{\bf Step 1.} There exists $f\in\O_{d}(X)$ such that $\ord_{(y_{0},0)}f=d$ and $P_{(y_{0},0),d}(f)(y,z)$ involves $y$.
\bigskip

Suppose that there is no such $f$. Then for any $f\in\O_{d}(X)$, the polynomial $P_{(y_{0},0),d}(f)(y,z)$ can be written as the linear combination of
\begin{itemize}\setlength{\itemsep}{1mm}
\item monomial of $(y_{1},\ldots,y_{n-k},z_{1},\ldots,z_{k})$ with degree at most $d-1$;
\item monomial of $(z_{1},\ldots,z_{k})$ with degree $d$.
\end{itemize}
It follows that the dimension of the image of $P_{(y_{0},0),d}$ satisfies
\[
\dim\mathrm{Im}P_{(y_{0},0),d} \leq \dim\mathcal{O}_{d-1}(\mathbb{C}^{n})+\binom{k+d-1}{d}.
\]
By Theorem \ref{Poincare-Siegel map injective}, $P_{(y_{0},0),d}:\O_{d}(X)\to\O_{d}(\C^{n})$ is injective and so
\[
\dim\mathcal{O}_{d}(X) \leq \dim\mathcal{O}_{d-1}(\mathbb{C}^{n})+\binom{k+d-1}{d},
\]
which contradicts with the assumption on $\dim\mathcal{O}_{d}(X)$.

\bigskip
\noindent
{\bf Step 2.} Decompose $f$.
\bigskip

Let $f$ be the function obtained in Step 1. Since $f\in\O_{d}(X)$, then
\[
|f(y,z)| \leq C\big(1+r(y_{0},y)+|z|\big)^{d}.
\]
This shows that, for any fixed $y\in Y$, $f(y,\cdot)$ is a holomorphic function on $\C^{k}$ with polynomial growth of degree at most $d$. Then by the classical Liouville theorem (i.e. Theorem \ref{classical Liouville theorem}), we know that $f(y,\cdot)$ is actually a polynomial on $\C^{k}$ with degree at most $d$, i.e.
\[
f(y,z) = \sum_{|\alpha|\leq d}f_{\alpha}(y)\cdot z^{\alpha},
\]
where
\[
\alpha = (\alpha_{1},\ldots,\alpha_{k}), \ \
z^{\alpha} = z_{1}^{\alpha_{1}}\cdots z_{k}^{\alpha_{k}}.
\]
Each $f_{\alpha}(y)$ is actually a holomorphic function on $Y$, since it can be written as the partial derivative of $f$:
\[
f_{\alpha}(y) = \frac{1}{\alpha_{1}!\cdots\alpha_{k}!}\cdot\frac{\de^{|\alpha|}f(y,z)}{\de z_{1}^{\alpha_{1}}\cdots\de z_{k}^{\alpha_{k}}}\Bigg|_{z=0}.
\]

\bigskip
\noindent
{\bf Step 3.} $f_{\alpha}(y)\cdot z^{\alpha}\in\O_{d}(X)$ for each $\alpha$.
\bigskip

Write $z=(z_{1},z')$ and $\alpha=(\alpha_{1},\alpha')$ where
\[
z' = (z_{2},\ldots,z_{k}), \ \ \alpha' = (\alpha_{2},\ldots,\alpha_{k}).
\]
By Step 2, we have the decomposition of $f$:
\[
f(y,z) = \sum_{|\alpha|\leq d}f_{\alpha}(y)\cdot z^{\alpha},
\]
where each $f_{\alpha}(y)$ is a holomorphic function on $Y$. We rearrange the above decomposition as
\[
f(y,z_{1},z') = \sum_{\alpha_{1}=0}^{d}\left(\sum_{|\alpha'|\leq d-\alpha_{1}}f_{(\alpha_{1},\alpha')}(y)\cdot (z')^{\alpha'}\right) z_{1}^{\alpha_{1}}.
\]
Then for $j=1,\ldots,d+1$, we have
\[
f(y,jz_{1},z') = \sum_{\alpha_{1}=0}^{d}j^{\alpha_{1}}\left(\sum_{|\alpha'|\leq d-\alpha_{1}}f_{(\alpha_{1},\alpha')}(y)\cdot (z')^{\alpha'}\cdot z_{1}^{\alpha_{1}}\right) .
\]
Since the Vandermonde determinant
\[
\det
\begin{pmatrix}
1^{0} & 1^{1} & \cdots & 1^{d} \\
2^{0} & 2^{1} & \cdots & 2^{d} \\
\vdots & \vdots & \ddots & \vdots \\
(d+1)^{0} & (d+1)^{1} & \cdots & (d+1)^{d}
\end{pmatrix}
\neq 0,
\]
then by Cramer's rule, for each $\alpha_{1}=0,\ldots,d$,
\[
\sum_{|\alpha'|\leq d-\alpha_{1}}f_{(\alpha_{1},\alpha')}(y)\cdot (z')^{\alpha'}\cdot z_{1}^{\alpha_{1}}
\]
can be written as the linear combination of
\[
f(y,jz_{1},z')\in O_{d}(X), \ \ \text{$j=1,\ldots,d+1$}.
\]
It then follows that
\[
\sum_{|\alpha'|\leq d-\alpha_{1}}f_{(\alpha_{1},\alpha')}(y)\cdot (z')^{\alpha'}\cdot z_{1}^{\alpha_{1}} \in \O_{d}(X) \ \ \text{for each $\alpha_{1}=0,\ldots,d$}.
\]
Applying the above argument repeatedly, we obtain $f_{\alpha}(y)\cdot z^{\alpha}\in\O_{d}(X)$ for each $\alpha$.

\bigskip
\noindent
{\bf Step 4.} $f_{\alpha}(y)\in\O_{d-|\alpha|}(Y)$ for each $\alpha$.
\bigskip

If $f_{\alpha}(y)\notin\O_{d-|\alpha|}(Y)$ for some $\alpha$, then there exists a sequence of points $\{\mathbf{y}_{i}\}\subseteq Y$ such that
\[
\lim_{i\to\infty}\frac{|f_{\alpha}(\mathbf{y}_{i})|}{\big(1+r(y_{0},\mathbf{y}_{i})\big)^{d-|\alpha|}} = \infty.
\]
Set $r_{i}=r(y_{0},\mathbf{y}_{i})$ and $\mathbf{z}_{i} = (r_{i},\ldots,r_{i})\in\C^{k}$.
Then $\{\mathbf{z}_{i}\}$ is a sequence of points in $\C^{k}$ with
\[
\mathbf{z}_{i}^{\alpha} = r_{i}^{\alpha_{1}}\cdots r_{i}^{\alpha_{k}} = r_{i}^{|\alpha|}, \ \
|\mathbf{z}_{i}| = \sqrt{k}\cdot r_{i}.
\]
Combining this with $f_{\alpha}(y)\cdot z^{\alpha}\in\O_{d}(X)$ (see Step 3),
\[
C \geq \frac{|f_{\alpha}(\mathbf{y}_{i}) \cdot \mathbf{z}_{i}^{\alpha}|}{\big(1+r(y_{0},\mathbf{y}_{i})+|\mathbf{z}_{i}|\big)^{d}}
= \frac{|f_{\alpha}(\mathbf{y}_{i})|\cdot r_{i}^{|\alpha|}}{\big(1+r_{i}+\sqrt{k}\cdot r_{i}\big)^{d}}
\geq \frac{|f_{\alpha}(\mathbf{y}_{i})|}{C\big(1+r_{i}\big)^{d-|\alpha|}}\to \infty,
\]
which is a contradiction.

\bigskip
\noindent
{\bf Step 5.} Find a non-trivial holomorphic function $f_{Y}$ on $Y$ such that $\ord_{y_{0}}f_{Y}=\deg f_{Y}$.
\bigskip

Recall that $f$ has the following decomposition (see Step 2):
\[
f(y,z) = \sum_{|\alpha|\leq d}f_{\alpha}(y)\cdot z^{\alpha}.
\]
It is clear that
\[
P_{(y_{0},0),d}(f)(y,z) = \sum_{|\alpha|\leq d}P_{y_{0},d-|\alpha|}(f_{\alpha})(y)\cdot z^{\alpha}.
\]
Since $\ord_{(y_{0},0)}f=d$ (see Step 1), then $P_{(y_{0},0),d}(f)(y,z)$ is a homogenous polynomial of $(y,z)$ with degree $d$. This shows that, for each $\alpha$, either $P_{y_{0},d-|\alpha|}(f_{\alpha})(y)\equiv0$, or $P_{y_{0},d-|\alpha|}(f_{\alpha})(y)$ is a homogenous polynomial of $y$ with degree $d-|\alpha|$. Since $P_{(y_{0},0),d}(f)(y,z)$ involves $y$ (see Step 1), then at least one of $P_{y_{0},d-|\alpha|}(f_{\alpha})(y)$ involves $y$ and this $f_{\alpha}(y)$ satisfies
\[
\ord_{y_{0}}f_{\alpha} = d-|\alpha| \geq 1.
\]
Combining $f_{\alpha}(y)\in\O_{d-|\alpha|}(Y)$ (see Step 4) and Theorem \ref{Poincare-Siegel map injective},
\[
d-|\alpha| = \ord_{y_{0}}f_{\alpha} \leq \deg f_{\alpha} \leq d-|\alpha|,
\]
which implies $1\leq\ord_{y_{0}}f_{\alpha}=\deg f_{\alpha}$, and so $f_{\alpha}(y)$ is the required non-trivial holomorphic function on $Y$.
\end{proof}

\begin{corollary}\label{flatness}
Let $(X,\omega)$ be a complete simply-connected $n$-dimensional K\"ahler manifold with non-negative holomorphic bisectional curvature. Suppose that there exists an integer $d\geq1$ such that
\[
\dim\mathcal{O}_{d}(X) \geq N_{2}+1
= \dim\mathcal{O}_{d}(\mathbb{C}^{n})-\binom{n+d-2}{d-1}+1.
\]
Then $(X,\omega)$ is biholomorphically isometric to $(\mathbb{C}^{n},\omega_{\C^{n}})$.
\end{corollary}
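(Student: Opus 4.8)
The plan is to prove Corollary \ref{flatness} by iterating Theorem \ref{splitting Ck} starting from $k=0$ and peeling off Euclidean factors one at a time until the remaining factor is $0$-dimensional. Concretely, I would argue by contradiction: suppose $(X,\omega)$ is not biholomorphically isometric to $(\C^n,\omega_{\C^n})$. Since $X$ is simply-connected with non-negative holomorphic bisectional curvature and $\dim\O_d(X)\geq N_2+1>\dim\O_{d-1}(\C^n)$ (here one checks $N_2 = \dim\O_d(\C^n)-\binom{n+d-2}{d-1}$ exceeds $\dim\O_{d-1}(\C^n)=\binom{n+d-1}{d-1}$ using the Pascal identity $\binom{n+d}{d}-\binom{n+d-1}{d-1}=\binom{n+d-1}{d}$ and comparing, for $d\geq 1$), Theorem \ref{splitting} already gives an initial splitting $X\cong Y^{n-1}\times\C$ — or more efficiently, apply Theorem \ref{splitting Ck} directly with $k=0$, noting that $Y^n=X$ trivially, and the hypothesis $\dim\O_d(X)\geq N_2+1$ should be arranged to imply $\dim\O_d(X)\geq\dim\O_{d-1}(\C^n)+\binom{d-1}{d}+1 = \dim\O_{d-1}(\C^n)+1$.

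The key point is that the numerical hypothesis propagates: if $X\cong Z^{n-k-1}\times\C^{k+1}$ after $k+1$ splittings, I need $\dim\O_d(Z^{n-k-1}\times\C^{k+1})\geq\dim\O_{d-1}(\C^n)+\binom{(k+1)+d-1}{d}+1$ in order to apply Theorem \ref{splitting Ck} again with the Euclidean dimension now $k+1$. So the heart of the argument is a lemma computing (or lower-bounding) $\dim\O_d(Y^{n-k}\times\C^k)$ in terms of $\dim\O_d(X)$ and checking that the inequality $\dim\O_d(X)\geq N_2+1$ is exactly strong enough to keep feeding the induction through $k=0,1,\dots,n-1$. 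Here the natural fact is that $\dim\O_d$ behaves well under products: a holomorphic function of polynomial growth on $Y\times\C^k$ expands (by the classical Liouville theorem on the $\C^k$ factor, as in Steps 2--4 of the proof of Theorem \ref{splitting Ck}) as $\sum_{|\alpha|\leq d} f_\alpha(y)z^\alpha$ with $f_\alpha\in\O_{d-|\alpha|}(Y)$, so $\dim\O_d(Y\times\C^k)\leq\sum_{|\alpha|\leq d}\dim\O_{d-|\alpha|}(Y)$, and conversely each $f_\alpha(y)z^\alpha$ lies in $\O_d(Y\times\C^k)$, giving equality $\dim\O_d(Y\times\C^k)=\sum_{j=0}^d\binom{k+j-1}{j}\cdot(\text{something})$ — more precisely, $\dim\O_d(Y\times\C^k)=\sum_{|\alpha|\leq d}\dim\O_{d-|\alpha|}(Y)$. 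Since the splittings strictly decrease $\dim Y$ while the dimension count is monotone, after at most $n$ steps we reach $Z^0=\{\text{pt}\}$, forcing $X\cong\C^n$ with the flat product metric, contradicting our assumption; and the isometry statement comes for free because Theorem \ref{splitting} and Theorem \ref{splitting Ck} give biholomorphic \emph{isometries}.

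I would organize the write-up as: (1) state and prove the product formula $\dim\O_d(Y^{m}\times\C^{k})=\sum_{i=0}^d\binom{k+i-1}{i}\dim\O_{d-i}(Y^{m})$ (really one only needs the direction that lets the induction run), (2) verify the arithmetic identity that $N_2+1$ translates into the hypothesis of Theorem \ref{splitting Ck} at stage $k=0$ and that each application preserves the relevant inequality at the next stage, (3) run the induction on $k$ down the tower $X=Y_0^n\supset$ splits into $Y_1^{n-1}\times\C\supset\cdots$ until $Y_n^0\times\C^n$, and (4) conclude. The main obstacle I anticipate is bookkeeping in step (2): one must show that after splitting off one $\C$ factor, the inequality $\dim\O_d(Z^{n-k-1}\times\C^{k+1})\geq\dim\O_{d-1}(\C^n)+\binom{k+d}{d}+1$ is \emph{automatically} satisfied — this requires knowing that splitting off a $\C$ factor from $Y^{n-k}$ does not decrease $\dim\O_d$ (indeed $\dim\O_d(Z\times\C)=\sum_{i=0}^d\dim\O_{d-i}(Z)\geq\sum_{i=0}^d\dim\O_{d-i}(Z)$ trivially, but one needs to compare with $\dim\O_d(Y^{n-k})$ where $Y^{n-k}\cong Z^{n-k-1}\times\C$, so in fact $\dim\O_d$ is unchanged by the splitting and the inequality is preserved verbatim). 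Once this monotonicity/invariance is pinned down, the rest is a clean finite induction, so the genuinely delicate part is purely the combinatorial identity manipulation with binomial coefficients (and the convention $\binom{p}{q}=0$ for $p<q$), not any geometry beyond what Theorem \ref{splitting Ck} already provides.
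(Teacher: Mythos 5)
Your proposal is correct and follows essentially the same route as the paper: rewrite $N_{2}+1=\dim\mathcal{O}_{d-1}(\mathbb{C}^{n})+\binom{n+d-2}{d}+1\geq\dim\mathcal{O}_{d-1}(\mathbb{C}^{n})+\binom{k+d-1}{d}+1$ for every $0\leq k\leq n-1$ and apply Theorem \ref{splitting Ck} iteratively. The ``propagation'' issue you worry about (and your product formula for $\dim\mathcal{O}_{d}(Y\times\C^{k})$) is unnecessary, since at every stage the manifold is the same $X$ up to biholomorphic isometry, so $\dim\mathcal{O}_{d}$ is literally unchanged and the single inequality above covers all steps at once.
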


\begin{proof}
It is clear that
\[
\begin{split}
N_{2} = {} & \dim\mathcal{O}_{d}(\mathbb{C}^{n})-\binom{n+d-2}{d-1} \\
= {} & \binom{n+d}{d}-\binom{n+d-2}{d-1} \\
= {} & \binom{n+d-1}{d-1}+\binom{n+d-1}{d}-\binom{n+d-2}{d-1} \\
= {} & \dim\mathcal{O}_{d-1}(\mathbb{C}^{n})+\binom{n+d-2}{d}.
\end{split}
\]
Then for each $0\leq k\leq n-1$, \\
\[
\begin{split}
\dim\mathcal{O}_{d}(X)
\geq {} & N_{2}+1 \\[1mm]
= {} & \dim\mathcal{O}_{d-1}(\mathbb{C}^{n})+\binom{n+d-2}{d}+1 \\
\geq {} & \dim\mathcal{O}_{d-1}(\mathbb{C}^{n})+\binom{k+d-1}{d}+1.
\end{split}
\]
Applying Theorem \ref{splitting Ck} to $(X,\omega)$ repeatedly, we complete the proof.
\end{proof}

\subsection{Simply-connectedness}\label{simply connectedness subsection}

In this subsection, we will show that if $(M,\omega)$ is flat and $\dim\mathcal{O}_{d}(M)$ is sufficiently large, then $M$ is simply-connected.

\begin{theorem}\label{simply connectedness}
Let $(M,\omega)$ be a complete $n$-dimensional K\"ahler manifold such that its universal cover $(\ti{M},\ti{\omega})$ is biholomorphically isometric to $(\C^{n},\omega_{\C^{n}})$. If there exists an integer $d\geq1$ such that
\[
\dim\mathcal{O}_{d}(M) \geq \dim\mathcal{O}_{d}(\mathbb{C}^{n})-\binom{n+d-1}{d-1}+1,
\]
then $M$ is simply-connected. In particular, $(M,\omega)$ is biholomorphically isometric to $(\mathbb{C}^{n},\omega_{\C^{n}})$.
\end{theorem}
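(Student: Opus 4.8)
Since $(\widetilde M,\widetilde\omega)\cong(\C^n,\omega_{\C^n})$, the deck group $\pi_1(M)$ acts on $\C^n$ by biholomorphic isometries, hence by Theorem \ref{biholomorphic isometry group of Cn} every $\sigma\in\pi_1(M)$ has the form $\sigma(z)=A_\sigma z+c_\sigma$ with $A_\sigma$ unitary. The strategy is to argue by contradiction: assuming $\pi_1(M)$ is non-trivial, produce a subspace of $\widetilde\O_d$-functions that is \emph{not} $\pi_1(M)$-invariant and whose complement (the invariant part) is forced to be too small, contradicting the lower bound on $\dim\O_d(M)$. By Lemma \ref{lift}, $\O_d(M)$ injects (via $f\mapsto\widetilde f$) into the space of $\pi_1(M)$-invariant elements of $\O_d(\widetilde M)=\O_d(\C^n)$, which by Theorem \ref{classical Liouville theorem} is exactly the space $\mathcal P_d$ of polynomials of degree $\le d$ on $\C^n$, of dimension $\dim\O_d(\C^n)=\binom{n+d}{d}$. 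So it suffices to show: if $\pi_1(M)\neq\{1\}$, then $\dim(\mathcal P_d)^{\pi_1(M)}\le\dim\O_d(\C^n)-\binom{n+d-1}{d-1}$.

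\textbf{Key steps.} First I would pick a non-trivial $\sigma=A z+c\in\pi_1(M)$. Since $\pi_1(M)$ acts freely and properly discontinuously by isometries, $\sigma$ is a fixed-point-free isometry of $\C^n$; elementary linear algebra shows that after a unitary change of coordinates we may assume $A$ is diagonal and, because $\sigma$ has no fixed point, the translation part is non-trivial in the directions where $A$ has eigenvalue $1$ — in particular $\sigma$ moves at least one coordinate by a genuine affine map $z_n\mapsto z_n+a$ with $a\neq0$ (if $A$ had no eigenvalue $1$ it would have a fixed point). Restricting attention to that last coordinate, invariance of a polynomial $p\in\mathcal P_d$ under $z_n\mapsto z_n+a$ forces $p$ to be independent of $z_n$: writing $p=\sum_{j=0}^d p_j(z_1,\dots,z_{n-1})z_n^j$, the top coefficient must vanish under $z_n\mapsto z_n+a$ unless $p_d\equiv 0$, and inductively $p_{d-1}\equiv0,\dots$, so $p_j\equiv0$ for $j\ge1$. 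Hence $(\mathcal P_d)^{\langle\sigma\rangle}\subseteq\mathcal P_d(\C^{n-1})$, which has dimension $\binom{n-1+d}{d}=\binom{n+d}{d}-\binom{n+d-1}{d-1}=\dim\O_d(\C^n)-\binom{n+d-1}{d-1}$. Therefore $\dim\O_d(M)\le\dim\widetilde\O_d{}^{\pi_1(M)}\le\dim(\mathcal P_d)^{\langle\sigma\rangle}\le\dim\O_d(\C^n)-\binom{n+d-1}{d-1}$, contradicting the hypothesis. This forces $\pi_1(M)=\{1\}$, so $M=\widetilde M$ and $(M,\omega)\cong(\C^n,\omega_{\C^n})$.

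\textbf{Main obstacle.} The delicate point is the normal-form claim: that a fixed-point-free unitary-affine map of $\C^n$ always acts as a non-trivial translation on some coordinate line after a suitable unitary coordinate change. One must be careful because $A$ may have eigenvalue $1$ on a higher-dimensional subspace, or the action could be a translation composed with a rotation that has \emph{no} eigenvalue $1$ at all — but in the latter case $\sigma$ would have a fixed point (solve $(I-A)z=c$), contradicting freeness, so the eigenvalue-$1$ eigenspace $V_1$ is non-zero; projecting $c$ onto $V_1$ must be non-zero (else again a fixed point exists within an affine subspace), and choosing coordinates adapted to the unitary decomposition $\C^n=V_1\oplus V_1^\perp$ puts $\sigma$ in the desired form. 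An alternative, perhaps cleaner, route avoiding normal forms: directly show that for any fixed-point-free $\sigma(z)=Az+c$ the only $\sigma$-invariant polynomials of degree $\le d$ are pulled back from the quotient of $\C^n$ by the closure of $\langle\sigma\rangle$, whose "polynomial dimension" is at most that of $\C^{n-1}$; this can be packaged using the averaging/Reynolds operator over the closure of $\langle\sigma\rangle$ in the isometry group, but the hands-on coordinate argument above is the most transparent. Finally, one should double-check the case where $\langle\sigma\rangle$ is infinite versus finite — the polynomial-invariance argument is identical, since a single relation $p\circ\sigma=p$ already suffices.
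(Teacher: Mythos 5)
Your overall strategy coincides with the paper's: identify $\O_{d}(M)$ with a space of $\pi_{1}(M)$-invariant polynomials of degree at most $d$ on $\C^{n}$, put a non-identity deck transformation $\sigma(z)=Az+c$ into a normal form in which the eigenvalue-one directions carry a genuine translation, show that invariant polynomials cannot involve the translated coordinate, and conclude by a dimension count (your bound $\dim(\mathcal{P}_{d})^{\langle\sigma\rangle}\leq\binom{n+d-1}{d}=\dim\O_{d}(\C^{n})-\binom{n+d-1}{d-1}$ is exactly the paper's threshold). The normal-form discussion in your ``main obstacle'' paragraph is correct: the eigenvalue-one eigenspace $V_{1}$ is nonzero and the $V_{1}$-component of $c$ is nonzero, since otherwise $\sigma$ would have a fixed point, which is impossible for a non-identity deck transformation.

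The gap is in the sentence ``invariance of a polynomial $p\in\mathcal{P}_{d}$ under $z_{n}\mapsto z_{n}+a$ forces $p$ to be independent of $z_{n}$.'' The polynomial $p$ is invariant under $\sigma$, which in your coordinates is $(z',z_{n})\mapsto(\rho(z'),z_{n}+a)$ with $\rho$ a nontrivial unitary affine map of the remaining coordinates whenever $A\neq I$; it is \emph{not} invariant under the pure translation $(z',z_{n})\mapsto(z',z_{n}+a)$. Writing $p=\sum_{j\leq m}p_{j}(z')z_{n}^{j}$ with $p_{m}\not\equiv0$ and $m\geq1$, comparing coefficients of $z_{n}^{m}$ in $p\circ\sigma=p$ gives only $p_{m}\circ\rho=p_{m}$, and the coefficient of $z_{n}^{m-1}$ gives $p_{m-1}\circ\rho-p_{m-1}=-ma\,p_{m}$, which does not by itself force $p_{m}\equiv0$; your induction as stated runs only if $\rho$ is the identity. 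The step is repairable: iterating the last identity yields $p_{m-1}(\rho^{N}(z'))-p_{m-1}(z')=-maN\,p_{m}(z')$, and since $\rho$ has bounded orbits (it has a fixed point, because $A-I$ is invertible on $V_{1}^{\perp}$ and the remaining $V_{1}$-directions are genuinely fixed), the left side stays bounded while the right side grows unless $p_{m}\equiv0$. The paper addresses precisely this point in Steps 1--2 of its proof: it first conjugates by a translation $\tau$ to remove the affine part in the rotated directions, then expands $\ti{f}_{\tau}$ in monomials $x^{\alpha}y^{\beta}$ and compares leading coefficients to establish that $\prod_{i}\lambda_{i}^{\alpha_{i}}=1$ whenever $c_{\alpha,\beta}\neq0$ (the claim \eqref{first rigidity claim}), thereby reducing $\sigma$-invariance to invariance under the pure translation $z\mapsto z+(0,b)$; only then does it invoke Lemma \ref{translation lemma}, whose dimension count ($\dim(V\cap W)\geq1$ with $W$ the span of monomials involving the translated variable) is a different packaging of your containment $(\mathcal{P}_{d})^{\langle\sigma\rangle}\subseteq\mathcal{P}_{d}(\C^{n-1})$ and gives the same threshold. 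So the skeleton of your argument is sound, but the decoupling of the rotation from the translation is a necessary step that your write-up skips.
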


\begin{proof}
Regard each $\sigma\in\pi_{1}(M)$ as a deck transformation of $\ti{M}$. It suffices to show that $\sigma$ is the identity map. Since $(\ti{M},\ti{\omega})$ is biholomorphically isometric to $(\C^{n},\omega_{\C^{n}})$, then $\sigma$ can be regarded as a biholomorphic isometry of $\C^{n}$. By Theorem \ref{biholomorphic isometry group of Cn}, $\sigma$ can be expressed as
\[
\sigma(z) = Az+c,
\]
where $A$ is a unitary matrix and $c\in\C^{n}$. Let $\lambda_{1},\ldots,\lambda_{k}$ be the eigenvalues of $A$ that are not equal to $1$ and set $\ell=n-k$. After making a unitary transformation, we assume without loss of generality that
\[
\sigma(z) =
\begin{pmatrix}
\Lambda & 0 \\
0  & \mathbf{1}_{\ell}
\end{pmatrix}
z+
\begin{pmatrix}
a \\
b
\end{pmatrix},
\]
where
\[
\Lambda
= \begin{pmatrix}
\lambda_{1} & & \\
 & \ddots & \\
 & & \lambda_{k}
\end{pmatrix},
\ \
\mathbf{1}_{\ell}
= \begin{pmatrix}
1 & & \\
 & \ddots & \\
 & & 1
\end{pmatrix}, \ \
a\in\C^{k}, \ \
b\in\C^{\ell}.
\]
To show $\sigma$ is the identity map, it suffices to show that $b=0$. Indeed, $b=0$ implies that
\[
\begin{pmatrix}
a_{\Lambda}\\
0
\end{pmatrix}
\, \text{is a fixed point of $\sigma$, where} \
a_{\Lambda} =
\begin{pmatrix}
\frac{a_{1}}{1-\lambda_{1}} \\
\vdots \\
\frac{a_{k}}{1-\lambda_{k}}
\end{pmatrix}.
\]
Recalling that $\sigma$ is a deck transformation, then $\sigma$ must be the identity map.

To show $b=0$, we introduce some definitions:
\[
\sigma'(z) =
\begin{pmatrix}
\Lambda & 0 \\
0  & \mathbf{1}_{\ell}
\end{pmatrix}
z+
\begin{pmatrix}
0 \\
b
\end{pmatrix}, \ \
\sigma''(z) =
z+
\begin{pmatrix}
0 \\
b
\end{pmatrix}, \ \
\tau(z) =
z+
\begin{pmatrix}
a_{\Lambda} \\
0
\end{pmatrix}
\]
and
\[
\ti{F} = \{\ti{f} = f\circ\pi:f\in\O_{d}(M)\}.
\]
By Lemma \ref{lift}, $\ti{F}\subseteq\O_{d}(\ti{M})$. Then each $\ti{f}\in\ti{F}$ can be regarded as a polynomial in $\O_{d}(\C^{n})$, and so is an $\sigma$-invariant polynomial with degree at most $d$. We split the argument of $b=0$ into three steps.

\bigskip
\noindent
{\bf Step 1.} For each $\ti{f}\in\ti{F}$, $\ti{f}_{\tau}=\ti{f}\circ\tau$ is $\sigma'$-invariant.
\bigskip

By the definition of $a_{\Lambda}$, it is clear that $a_{\Lambda}=\Lambda a_{\Lambda}+a$ and so
\[
\tau\circ\sigma'(z)
= \begin{pmatrix}
\Lambda & 0 \\
0  & \mathbf{1}_{\ell}
\end{pmatrix}
z+
\begin{pmatrix}
a_{\Lambda} \\
b
\end{pmatrix}
= \begin{pmatrix}
\Lambda & 0 \\
0  & \mathbf{1}_{\ell}
\end{pmatrix}
z+
\begin{pmatrix}
\Lambda a_{\Lambda}+a \\
b
\end{pmatrix}
= \sigma\circ\tau(z).
\]
Since $\ti{f}$ is $\sigma$-invariant, then $\ti{f}\circ\sigma=\ti{f}$ and so
\[
\ti{f}_{\tau}\circ\sigma' = \ti{f}\circ\tau\circ\sigma' = \ti{f}\circ\sigma\circ\tau = \ti{f}\circ\tau = \ti{f}_{\tau},
\]
which implies $\ti{f}_{\tau}$ is $\sigma'$-invariant.

\bigskip
\noindent
{\bf Step 2.} For each $\ti{f}\in\ti{F}$, $\ti{f}_{\tau}$ is $\sigma''$-invariant.
\bigskip

Write $z=(x,y)$ where
\[
x = (x_{1},\ldots,x_{k}), \ \ y = (y_{1},\ldots,y_{\ell}),
\]
and suppose that $\ti{f}_{\tau}$ can be written as
\[
\ti{f}_{\tau}(x,y) = \sum_{|\alpha|+|\beta|\leq d} c_{\alpha,\beta} \cdot x^{\alpha} \cdot y^{\beta}.
\]
where
\[
c_{\alpha,\beta}\in\C, \ \
\alpha = (\alpha_{1},\ldots,\alpha_{k}), \ \
\beta = (\beta_{1},\ldots,\beta_{\ell}).
\]
Since $\ti{f}_{\tau}$ is $\sigma'$-invariant (see Step 1), then
\begin{equation}\label{first rigidity eqn 1}
\sum_{|\alpha|+|\beta|\leq d} c_{\alpha,\beta} \cdot \prod_{i=1}^{k}(\lambda_{i}x_{i})^{\alpha_i} \cdot \prod_{j=1}^{\ell}(y_{j}+b_{j})^{\beta_j}
= \sum_{|\alpha|+|\beta|\leq d} c_{\alpha,\beta} \cdot \prod_{i=1}^{k} x_{i}^{\alpha_i} \cdot \prod_{j=1}^{\ell} y_{j}^{\beta_j}.
\end{equation}
We claim that
\begin{equation}\label{first rigidity claim}
c_{\alpha,\beta}\cdot\prod_{i=1}^{k}\lambda_{i}^{\alpha_i} = c_{\alpha,\beta} \ \, \text{for any $\alpha$ and $\beta$}.
\end{equation}
Given this claim, \eqref{first rigidity eqn 1} becomes
\[
\sum_{|\alpha|+|\beta|\leq d} c_{\alpha,\beta} \cdot \prod_{i=1}^{k}x_{i}^{\alpha_i} \cdot \prod_{j=1}^{\ell}(y_{j}+b_{j})^{\beta_j}
= \sum_{|\alpha|+|\beta|\leq d} c_{\alpha,\beta} \cdot \prod_{i=1}^{k} x_{i}^{\alpha_i} \cdot \prod_{j=1}^{\ell} y_{j}^{\beta_j},
\]
which implies $\ti{f}_{\tau}$ is $\sigma''$-invariant, as required.

Next we prove \eqref{first rigidity claim}. For each fixed $\alpha=(\alpha_{1},\ldots,\alpha_{k})$, by comparing the term containing $\prod_{i=1}^{k} x_{i}^{\alpha_i}$ on both sides of \eqref{first rigidity eqn 1}, we have
\begin{equation}\label{first rigidity eqn 2}
\prod_{i=1}^{k} \lambda_{i}^{\alpha_i}\left(\sum_{|\beta|\leq d-|\alpha|} c_{\alpha,\beta}\cdot\prod_{j=1}^{\ell}(y_{j}+b_{j})^{\beta_j} \right)
= \sum_{|\beta|\leq d-|\alpha|} c_{\alpha,\beta}\cdot\prod_{j=1}^{\ell}y_{j}^{\beta_j}.
\end{equation}
If $c_{\alpha,\beta}=0$ for all $\beta$, then \eqref{first rigidity claim} is trivial. Otherwise, by comparing the coefficient of the leading term of \eqref{first rigidity eqn 2}, we obtain $\prod_{i=1}^{k}\lambda_{i}^{\alpha_i}=1$, which also implies \eqref{first rigidity claim}.

\bigskip
\noindent
{\bf Step 3.} Prove $b=0$.
\bigskip

Define
\[
V = \{\ti{f}_{\tau}=\ti{f}\circ\tau:\ti{f}\in \ti{F}\}.
\]
Then
\[
\dim V = \dim \ti{F} = \dim\O_{d}(M) \geq \dim\mathcal{O}_{d}(\mathbb{C}^{n})-\binom{n+d-1}{d-1}+1.
\]
Each $\ti{f}_{\tau}\in V$ is $\sigma''$-invariant (see Step 2) and $\sigma''$ is a translation of $\C^{n}$. Using Lemma \ref{translation lemma} below, we obtain $b=0$.
\end{proof}

\begin{lemma}\label{translation lemma}
Let $\rho(z)=z+c$ be a translation of $\C^{n}$ and $V\subseteq\O_{d}(\C^{n})$ be a subspace such that every $g\in V$ is $\rho$-invariant. If
\[
\dim V \geq \dim\mathcal{O}_{d}(\mathbb{C}^{n})-\binom{n+d-1}{d-1}+1,
\]
then $c=0$.
\end{lemma}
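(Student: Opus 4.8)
The plan is to argue by contradiction: assume $c\neq0$ and show that the subspace of $\O_d(\C^n)$ consisting of $\rho$-invariant polynomials is too small to contain $V$. The first step is to put $\rho$ in a normal form. Any invertible linear map $A$ of $\C^n$ induces a linear automorphism $g\mapsto g\circ A^{-1}$ of $\O_d(\C^n)$ (a linear change of variables preserves the degree of a polynomial, and the dimension of a subspace is unchanged), while conjugating $\rho$ by $A$ replaces the translation vector $c$ by $Ac$. Hence, after such a change of coordinates, I may assume $c=(1,0,\ldots,0)$.

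Next I would identify all $\rho$-invariant elements of $\O_d(\C^n)$. If $g\in\O_d(\C^n)$ satisfies $g\circ\rho=g$, then for each fixed $(z_2,\ldots,z_n)$ the one-variable polynomial $t\mapsto g(t,z_2,\ldots,z_n)$ is invariant under $t\mapsto t+1$, hence agrees with its value at $t=0$ at every nonnegative integer, hence is constant. Therefore $g$ is independent of $z_1$, i.e. $g\in\O_d(\C^{n-1})$ in the variables $z_2,\ldots,z_n$, so the space of $\rho$-invariant polynomials of degree at most $d$ has dimension $\binom{(n-1)+d}{d}=\binom{n+d-1}{d}$. On the other hand, Pascal's rule gives
\[
\dim\O_d(\C^n)-\binom{n+d-1}{d-1}=\binom{n+d}{d}-\binom{n+d-1}{d-1}=\binom{n+d-1}{d},
\]
so the hypothesis forces $\dim V\geq\binom{n+d-1}{d}+1$, which contradicts the fact that $V$ is contained in a space of dimension $\binom{n+d-1}{d}$. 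Thus $c=0$.

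Since the argument is entirely elementary, I do not expect a genuine obstacle; the only point needing a word of care is that the linear normalization of $c$ is harmless, which holds because the induced map on $\O_d(\C^n)$ is a linear isomorphism and therefore preserves both the dimension hypothesis on $V$ and the conclusion $c=0$. If one prefers to avoid coordinates, the same conclusion follows by noting that $t\mapsto g(z+tc)$ is a polynomial in $t$ that is constant on $\Z$, hence constant, so its $t$-derivative at $0$ vanishes; this says the directional derivative of $g$ along $c$ is identically zero, so $g$ descends to $\C^n/\C c\cong\C^{n-1}$, giving the same dimension count.
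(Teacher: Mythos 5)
Your proof is correct and rests on the same two ingredients as the paper's: the fact that a one-variable polynomial invariant under a nonzero translation must be constant, and the Pascal identity $\binom{n+d}{d}-\binom{n+d-1}{d-1}=\binom{n+d-1}{d}$. The only difference is packaging: you compute the full space of $\rho$-invariant elements of $\O_{d}(\C^{n})$ (namely $\O_{d}(\C^{n-1})$, of dimension $\binom{n+d-1}{d}$) and compare dimensions directly, whereas the paper intersects $V$ with the span of the monomials involving $z_{1}$ to extract a single polynomial that violates invariance --- the two dimension counts are equivalent.
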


\begin{proof}
After making a unitary transformation, we assume without loss of generality that
\[
c = (c_{1},0,\ldots,0).
\]
It suffices to show $c_{1}=0$. Define the space $W$ by
\[
W = \mathrm{span}\{z_{1}^{\alpha_{1}}\cdots z_{n}^{\alpha_{n}}\in\O_{d}(\C^{n}):\alpha_{1}\neq0\}.
\]
It then follows that
\[
\dim W = \dim\mathcal{O}_{d}(\mathbb{C}^{n})-\dim\mathcal{O}_{d}(\mathbb{C}^{n-1})
= \binom{n+d}{d}-\binom{n+d-1}{d}.
\]
We compute
\[
\begin{split}
\dim(V\cap W)
\geq {} & \dim W+\dim V-\dim\mathcal{O}_{d}(\mathbb{C}^{n}) \\[1.6mm]
\geq {} & \binom{n+d}{d}-\binom{n+d-1}{d}-\binom{n+d-1}{d-1}+1 \\[1.8mm]
= {} & 1.
\end{split}
\]
Then there exists a non-trivial polynomial $g\in V\cap W$. Write $z=(z_{1},z')$ where $z'=(z_{2},\ldots,z_{n})$. Then $g$ can be written as
\[
g(z_{1},z') = \sum_{j=0}^{m}g_{j}(z') \cdot z_{1}^{j},
\]
where $m\leq d$ and $g_{m}(z')$ is a non-zero polynomial. Since $g\in W$, then $g$ involves $z_{1}$ and so $m\geq1$. Fix $z_{0}'$ such that $g_{m}(z_{0}')\neq0$ and define
\[
P(z_{1}) = g(z_{1},z_{0}')-g(0,z_{0}') = \sum_{j=0}^{m}g_{j}(z_{0}') \cdot z_{1}^{j} - g_{0}(z_{0}').
\]
Then $P(z_{1})$ is a polynomial of $z_{1}$ with degree $m\geq1$. Since $g\in V$, then $g$ is $\rho$-invariant and so
\[
P(z_{1}) = g(z_{1},z_{0}')-g(0,z_{0}') = g(z_{1}+c_{1},z_{0}')-g(0,z_{0}') = P(z_{1}+c_{1}).
\]
It is clear that $P(kc_{1})=0$ for any $k\in\mathbb{Z}$. If $c_{1}\neq0$, then the polynomial $P(z_{1})$ has infinitely many zeros, which is impossible.
\end{proof}

\subsection{Proof of Theorem \ref{first rigidity}}\label{proof of first rigidity subsection}

Now we are ready to prove Theorem \ref{first rigidity}.

\begin{proof}[Proof of Theorem \ref{first rigidity}]
Applying Corollary \ref{flatness} to the universal cover $(\ti{M},\ti{\omega})$, we obtain that $(\ti{M},\ti{\omega})$ is biholomorphically isometric to $(\C^{n},\omega_{\C^{n}})$. Then Theorem \ref{first rigidity} follows from Theorem \ref{simply connectedness}.
\end{proof}

\section{Maximal volume growth and second rigidity}\label{maximal volume growth and second rigidity section}

In this section, we will prove Theorem \ref{maximal volume growth} (i.e. maximal volume growth) and \ref{second rigidity} (i.e. second rigidity). As mentioned in Section \ref{introduction}, Theorem \ref{second rigidity} follows from Theorem \ref{maximal volume growth}. We will prove Theorem \ref{maximal volume growth} by refining Liu's argument of Theorem \ref{Liu result MVG}. Let us first recall Liu's argument in Subsection \ref{Liu argument}, and then give the proof of Theorem \ref{maximal volume growth} in Subsection \ref{maximal volume growth subsection}.

\subsection{Liu's argument}\label{Liu argument}

Suppose that $(M,\omega)$ does not have maximal volume growth. Let $X$ be a tangent cone at infinity. By the Cheeger-Colding theory \cite{CC97}, the Hausdorff dimension of $X$ is less than or equal to $2n-1$. Since the set of regular points is dense in $X$, then there is $q\in X$ such that the tangent cone at $q$ is isometric to $\R^{k}$ for some $k\leq 2n-1$. This implies that there exist $p_{i}\in M$ and $r_{i}>0$ such that $(M_{i},p_{i},\dist_{i},\nu_{i})$ converges to $(\R^{k},0,\dist_{\R^{k}},\nu)$ in the measured Gromov-Hausdorff sense, where
\[
M_{i} = M, \ \
\dist_{i} = \frac{\dist}{r_{i}}, \ \
\nu_{i} = \frac{\vol}{\vol(B_{r_{i}}(p_{i}))}
\]
and $\nu$ is proportional to the standard measure of $\R^{k}$. Recall some definitions in Subsection \ref{Some spaces}:
\[
\mathcal{H}_{d}(\R^{k}) = \{\text{complex harmonic polynomials on $\R^{k}$ with degree at most $d$}\}, \\[0.6mm]
\]
\[
\mathcal{P}_{d}(\C^{k-n}\times\R^{2n-k}) = \{\text{complex polynomials of $(z,x)$ with degree at most $d$}\}, \\[1.2mm]
\]
\[
\mathcal{E}_{d}(\C^{k-n}\times\R^{2n-k})
= \{f\in\mathcal{P}_{d}(\C^{k-n}\times\R^{2n-k}):\text{$f(z,\cdot)\in\mathcal{H}_{d}(\R^{2n-k})$ for any $z\in\C^{k-n}$}\}.
\]

\begin{lemma}[Lemma 2 and Lemma 4 of \cite{Liu16b}]\label{Liu lemmas}
Let $V_{i}$ be an $m$-dimensional subspace of $\O_{d}(M_{i})$. Then $V_{i}$ converges to an $m$-dimensional subspace $V$ of $\H_{d}(\R^{k})$ in the following sense. For any $f\in V$, there is a sequence $\{f_{i}\}$ such that $f_{i}\in\O_{d}(M_{i})$ and $f_{i}$ converges uniformly to $f$ in any compact set. Furthermore, if $n+1\leq k\leq 2n-1$, then there is a canonical identification $\R^{k}\cong\C^{k-n}\times\R^{2n-k}$ such that $V \subset \E_{d}(\C^{k-n}\times\R^{2n-k})$.
\end{lemma}

\medskip

It is clear that $\O_{d}(M_{i})=\O_{d}(M)$. Choosing $V_{i}=\O_{d}(M)$ and applying Lemma \ref{Liu lemmas}, there is a space $V$ of functions on $\R^{k}$ such that
\begin{itemize}\setlength{\itemsep}{1mm}
\item $\dim V=\dim\O_{d}(M)\geq cd^{n}$;
\item $V\subset\H_{d}(\R^{k})$ when $k\leq n$;
\item $V\subset\E_{d}(\C^{k-n}\times\R^{2n-k})$ when $n+1\leq k\leq 2n-1$.
\end{itemize}
According to the value of $k$, the argument can be divided into two cases.

\bigskip
\noindent
{\bf Case 1.} $k\leq n$.
\bigskip

By (i) of Remark \ref{dimension upper bound of E and H}, we compute
\[
cd^{n} \leq \dim V \leq \dim\H_{d}(\R^{k}) \leq C(k)d^{k-1} \leq C(n)d^{n-1},
\]
which is a contradiction.

\bigskip
\noindent
{\bf Case 2.} $n+1\leq k\leq 2n-1$.
\bigskip

By (ii) of Remark \ref{dimension upper bound of E and H}, we compute
\[
cd^{n} \leq \dim V \leq \dim\E_{d}(\C^{k-n}\times\R^{2n-k}) \leq C(n,k)d^{n-1},
\]
which is a contradiction.

\begin{remark}\label{Liu argument remark}
The above Liu's argument actually shows that the dimension condition
\begin{equation}\label{Liu argument remark eqn}
\begin{split}
\dim\O_{d}(M) \geq {} & \max_{k}\left(\dim\H_{d}(\R^{k}), \ \dim\E_{d}(\C^{k-n}\times\R^{2n-k})\right) \\[1mm]
= {} & \binom{n+d}{d}-\binom{n+d-2}{d-2}+1
\end{split}
\end{equation}
ensures the maximal volume growth. However, it is clear that
\[
\begin{split}
\binom{n+d}{d}-\binom{n+d-2}{d-2} - N_{3}
= {} & \binom{n+d}{d}-\binom{n+d-2}{d-2} - \binom{n-1+d}{d} \\
= {} & \binom{n-1+d}{d-1}-\binom{n+d-2}{d-2} \\
= {} & \binom{n+d-2}{d-1},
\end{split}
\]
and so \eqref{Liu argument remark eqn} is strictly stronger than $\dim\O_{d}(M)\geq N_{3}+1$ in general.
\end{remark}

\subsection{Proof of Theorem \ref{maximal volume growth}}\label{maximal volume growth subsection}

As mentioned in Remark \ref{Liu argument remark}, to prove Theorem \ref{maximal volume growth}, we need to refine Liu's argument. Let us begin with a straightforward observation.

\begin{lemma}\label{observation}
For any $f\in V$, the following holds:
\begin{itemize}\setlength{\itemsep}{1mm}
\item $f^{2}\in\H_{2d}(\R^{k})$ when $k\leq n$;
\item $f^{2}\in\E_{2d}(\C^{k-n}\times\R^{2n-k})$ when $n+1\leq k\leq 2n-1$.
\end{itemize}
\end{lemma}

\begin{proof}
Let $\{f_{i}\}$ be the sequence such that $f_{i}\in\O_{d}(M_{i})$ and $f_{i}$ converges uniformly to $f$ in any compact set. Then $\mathrm{span}\{f_{i}^{2}\}$ is an one-dimensional subspace of $\O_{d}(M_{i})$ and $\mathrm{span}\{f_{i}^{2}\}$ converges to $\mathrm{span}\{f^{2}\}$. By Lemma \ref{Liu lemmas}, the function $f^{2}$ satisfies the required property.
\end{proof}

Now we are ready to prove Theorem \ref{maximal volume growth}.

\begin{proof}[Proof of Theorem \ref{maximal volume growth}]
Following Liu's argument as in Subsection \ref{Liu argument}, it suffices to derive a contradiction when $k\leq 2n-1$. Recall that
\[
\dim V = \dim\O_{d}(M) \geq N_{3}+1 = \binom{n-1+d}{d}+1.
\]

\bigskip
\noindent
{\bf Case 1.} $k\leq n$.
\bigskip

In this case, $V$ is a subspace of $\H_{d}(\R^{k})$. Then $\ov{V}$ is also a subspace of $\H_{d}(\R^{k})$ with
\begin{equation}\label{case 1 dim ov V and V}
\dim \ov{V} = \dim V \geq \binom{n-1+d}{d}+1.
\end{equation}
Using \eqref{case 1 dim ov V and V} and Remark \ref{dimension of H}, we compute
\begin{equation}\label{dimension computation}
\begin{split}
\dim(V\cap\ov{V}) \geq {} & \dim V+\dim \ov{V}-\dim \H_{d}(\R^{k}) \\[4mm]
\geq {} & \dim V+\dim \ov{V}-\dim \H_{d}(\R^{n}) \\[3mm]
= {} & 2\binom{n-1+d}{d}+2-\binom{n+d}{d}+\binom{n+d-2}{d-2} \\[2mm]
= {} & \binom{n-1+d}{d}-\binom{n-1+d}{d-1}+\binom{n+d-2}{d-2}+2 \\[2mm]
= {} & \binom{n-1+d}{d}-\binom{n-2+d}{d-1}+2 \\[2mm]
= {} & \binom{n-2+d}{d}+2,
\end{split}
\end{equation}
which contradicts with Lemma \ref{constant lemma} below.

\begin{lemma}\label{constant lemma}
If $f\in V\cap\ov{V}$, then $f$ is constant.
\end{lemma}

\begin{proof}
Let $h$ be the real part of $f$, i.e.
\[
h = \frac{1}{2}(f+\ov{f}).
\]
Then $f\in V\cap\ov{V}$ implies that $h\in V$. By Lemma \ref{observation}, then $h^{2}$ is still harmonic. We compute
\[
2|\nabla h|^{2} = 2\nabla h \cdot \ov{\nabla h} = 2\nabla h \cdot \nabla h = \Delta h^{2}-2h\Delta h = 0,
\]
which implies $h$ is constant. Similarly, the imaginary part of $f$ is also constant.
\end{proof}

\bigskip
\noindent
{\bf Case 2a.} $k=2n-1$.
\bigskip

In this case, $V$ is a subspace of $\E_{d}(\C^{n-1}\times\R)$ and so any $f\in V$ can be written as
\[
f(z,x) = P_{0}(z)+P_{1}(z) \cdot x,
\]
where $P_{0}\in\O_{d}(\C^{n-1})$ and $P_{1}\in\O_{d-1}(\C^{n-1})$. It is clear that
\[
f^{2}(z,x) = P_{0}^{2}(z)+2P_{0}(z)P_{1}(z) \cdot x+P_{1}^{2}(z) \cdot x^{2}.
\]
Combining this with Lemma \ref{observation},
\[
0 = \Delta_{x}f^{2}(z,x) = 2P_{1}^{2}(z).
\]
It follows that $P_{1}\equiv0$ and so
\[
f(z,x) = P_{0}(z) \in \O_{d}(\C^{n-1}).
\]
Then we obtain $V\subset\O_{d}(\C^{n-1})$, which implies
\[
\dim V \leq \dim\O_{d}(\C^{n-1}) = \binom{n-1+d}{d} = N_{3}.
\]
This contradicts with $\dim V\geq N_{3}+1$.

\bigskip
\noindent
{\bf Case 2b.} $n+1\leq k\leq 2n-2$.
\bigskip

In this case, $V$ is a subspace of $\E_{d}(\C^{k-n}\times\R^{2n-k})$ and so any $f\in V$ can be written as
\[
f(z,x) = \sum_{|\alpha|\leq d}h_{\alpha}(x) \cdot z^{\alpha},
\]
where
\[
\alpha = (\alpha_{1},\ldots,\alpha_{k-n}), \ \
z^{\alpha} = z_{1}^{\alpha_{1}}\cdots z_{k-n}^{\alpha_{k-n}}, \ \
h_{\alpha} \in \H_{d-|\alpha|}(\R^{2n-k}).
\]
Define
\[
(Tf)(z,x) = \sum_{|\alpha|\leq d}\ov{h_{\alpha}(x)} \cdot z^{\alpha},
\]
and set
\[
TV = \{Tf: f\in V\}.
\]
Then $TV$ is a subspace of $\E_{d}(\C^{k-n}\times\R^{2n-k})$ with
\begin{equation}\label{case 2b dim TV and T}
\dim TV = \dim V \geq N_{3}+1 = \binom{n-1+d}{d}+1.
\end{equation}
By \eqref{case 2b dim TV and T}, Lemma \ref{dimension of E} and the similar computation of \eqref{dimension computation}, we obtain
\[
\begin{split}
\dim(V\cap TV) \geq {} & \dim V+\dim TV-\dim \E_{d}(\C^{k-n}\times\R^{2n-k}) \\[2mm]
\geq {} & 2\binom{n-1+d}{d}+2-\binom{n+d}{d}+\binom{n+d-2}{d-2} \\[1mm]
= {} & \binom{n-2+d}{d}+2.
\end{split}
\]
This implies
\begin{equation}\label{dim V TV}
\dim(V\cap TV) \geq \dim\O_{d}(\C^{n-2})+2.
\end{equation}

\smallskip

On the other hand, for any $f\in V\cap TV$, it follows from $T^{2}=\mathrm{Id}$ that $Tf\in V$ and so
\[
h = \frac{1}{2}(f+Tf) \in V.
\]
It is clear that $Th=h$. Applying Lemma \ref{independent of x} below, we obtain
\[
h = \frac{1}{2}(f+Tf) \in \O_{d}(\C^{k-n}).
\]
Similarly, we have
\[
\frac{1}{2\sqrt{-1}}(f-Tf) \in \O_{d}(\C^{k-n}).
\]
It follows that $f\in\O_{d}(\C^{k-n})$. Then we obtain
\[
V\cap TV \subset \O_{d}(\C^{k-n}).
\]
From $n+1\leq k\leq 2n-2$, we obtain $1\leq k-n\leq n-2$ and then
\[
\dim(V\cap TV) \leq \dim\O_{d}(\C^{k-n}) \leq \dim\O_{d}(\C^{n-2}),
\]
which contradicts with \eqref{dim V TV}.

\begin{lemma}\label{independent of x}
For any $h\in V$, if $Th=h$, then $h\in\O_{d}(\C^{k-n})$.
\end{lemma}

\begin{proof}
Set $p=k-n$ and $q=2n-k$. To prove Lemma \ref{independent of x}, by Lemma \ref{observation}, it suffices to prove the following claim: for any
\[
h(z,x) = \sum_{|\alpha|\leq d}h_{\alpha}(x) \cdot z^{\alpha} \in \E_{d}(\C^{p}\times\R^{q}),
\]
if $h$ satisfies
\begin{itemize}\setlength{\itemsep}{1mm}
\item $h^{2}\in\E_{2d}(\C^{p}\times\R^{q})$;
\item $h_{\alpha}(x)$ is real for each $\alpha$,
\end{itemize}
then $h$ is independent of $x$, i.e. $h(z,x)=h(z)\in\O_{d}(\C^{p})$.

\medskip

We will prove this claim by induction on $p$.

\bigskip
\noindent
{\bf Step 1.} $p=1$.
\bigskip

We decompose
\[
h(z_{1},x) = \sum_{\alpha_{1}=0}^{d}h_{\alpha_{1}}(x) \cdot z_{1}^{\alpha_{1}} \in \E_{d}(\C\times\R^{q})
\]
and
\[
h^{2}(z_{1},x) = \sum_{\ell=0}^{2d} \left(\sum_{\alpha_{1}+\beta_{1}=\ell} h_{\alpha_{1}}(x) \cdot h_{\beta_{1}}(x) \right) z_{1}^{\ell} \in \E_{2d}(\C\times\R^{q}).
\]
It then follows that
\begin{equation}\label{independent of x case 1 eqn 1}
h_{\alpha_{1}}(x) \in \H_{d-\alpha_{1}}(\R^{q}) \ \text{ for} \ \alpha_{1} = 0,\ldots,d
\end{equation}
and
\begin{equation}\label{independent of x case 1 eqn 2}
\sum_{\alpha_{1}+\beta_{1} = \ell} h_{\alpha_{1}}(x) \cdot h_{\beta_{1}}(x) \in \H_{2d-\ell}(\R^{q}) \ \text{for} \ \ell=0,\ldots,2d.
\end{equation}
To prove $h$ is independent of $x$, it suffices to show that each $h_{\alpha_{1}}$ is constant. From \eqref{independent of x case 1 eqn 1}, we obtain $h_{d}(x)\in\H_{0}(\R^{q})$ and so $h_{d}$ is constant. Taking $\ell=2d-2$ in \eqref{independent of x case 1 eqn 2}, we see that
\begin{equation}\label{independent of x case 1 eqn 3}
h_{d-1}^{2}(x) + 2h_{d}(x) \cdot h_{d-2}(x) \in \H_{2}(\R^{q}).
\end{equation}
By \eqref{independent of x case 1 eqn 1}, we have $h_{d-2}(x)\in\H_{2}(\R^{q})$. Since $h_{d}$ is constant, then
\[
2h_{d}(x) \cdot h_{d-2}(x) \in \H_{2}(\R^{q}).
\]
Combining this with \eqref{independent of x case 1 eqn 3}, we see that $h_{d-1}^{2}(x) \in \H_{2}(\R^{q})$. Using \eqref{independent of x case 1 eqn 1} again, we have $h_{d-1}(x)\in\H_{1}(\R^{q})$. Since $h_{d-1}$ is real, then
\[
\begin{split}
2|\nabla_{x}h_{d-1}|^{2} = {} & 2\nabla_{x}h_{d-1}\cdot \ov{\nabla_{x}h_{d-1}} = 2\nabla_{x}h_{d-1}\cdot \nabla_{x}h_{d-1} \\
= {} & \Delta_{x}(h_{d-1}^{2}) - 2h_{d-1}\Delta_{x}h_{d-1} = 0,
\end{split}
\]
which implies $h_{d-1}$ is constant. Taking $\ell=2d-4,\ldots,0$ in \eqref{independent of x case 1 eqn 2} and repeating the above process, we obtain that each $h_{\alpha_{1}}$ is constant.

\bigskip
\noindent
{\bf Step 2.} $p\geq2$.
\bigskip

Suppose by induction that the claim is true for $p-1$. Write $z=(z_{1},z')$ and $\alpha=(\alpha_{1},\alpha')$ where
\[
z' = (z_{2},\ldots,z_{q}), \ \ \alpha' = (\alpha_{2},\ldots,\alpha_{q}).
\]
For any
\[
h(z,x) = \sum_{|\alpha|\leq d}h_{\alpha}(x) \cdot z^{\alpha} \in \E_{d}(\C^{p}\times\R^{q}),
\]
we rearrange the above terms as
\[
h(z_{1},z',x) = \sum_{\alpha_{1}=0}^{d}\left(\sum_{|\alpha'|\leq d-\alpha_{1}}h_{(\alpha_{1},\alpha')}(x)\cdot (z')^{\alpha'}\right) z_{1}^{\alpha_{1}}.
\]
Set
\[
g_{\alpha_{1}}(z',x) = \sum_{|\alpha'|\leq d-\alpha_{1}}h_{(\alpha_{1},\alpha')}(x)\cdot (z')^{\alpha'}
\]
and then
\begin{equation}\label{independent of x case 2 eqn 1}
g_{\alpha_{1}}(z',x) \in \E_{d-\alpha_{1}}(\C^{p-1}\times\R^{q}) \ \text{for} \ \alpha_{1}=0,\ldots,d.
\end{equation}
We aim to show that each $h_{\alpha_{1}}$ is independent of $x$. From
\[
h^{2}(z_{1},z',x) = \sum_{\ell=0}^{2d} \left(\sum_{\alpha_{1}+\beta_{1}=\ell} g_{\alpha_{1}}(z',x) \cdot g_{\beta_{1}}(z',x) \right) z_{1}^{\ell} \in \E_{2d}(\C^{p}\times\R^{q}),
\]
we obtain
\begin{equation}\label{independent of x case 2 eqn 2}
\sum_{\alpha_{1}+\beta_{1}=\ell} g_{\alpha_{1}}(z',x) \cdot g_{\beta_{1}}(z',x) \in \E_{2d-\ell}(\C^{p-1}\times\R^{q}) \ \text{for} \ \ell=0,\ldots,2d.
\end{equation}
By \eqref{independent of x case 2 eqn 1}, we obtain $g_{d}$ is independent of $x$ (actually $g_{d}$ is constant) and $g_{d-2}(z',x)\in\E_{2}(\C^{p-1}\times\R^{q})$. Then
\[
2g_{d}(z',x) \cdot g_{d-2}(z',x) = 2g_{d}(z') \cdot g_{d-2}(z',x) \in \E_{2}(\C^{p-1}\times\R^{q}).
\]
Taking $\ell=2d-2$ in \eqref{independent of x case 2 eqn 2} and using the similar argument of Step 1, we have
\[
g_{d-1}^{2}(z',x) \in \E_{2}(\C^{p-1}\times\R^{q}).
\]
Using \eqref{independent of x case 2 eqn 1} again, we have $g_{d-1}(z',x) \in \E_{1}(\C^{p-1}\times\R^{q})$. It is clear that $h_{(\alpha_{1},\alpha')}(x)$ is real for each $\alpha'$. By the inductive hypothesis, we obtain that $g_{d-1}$ is independent of $x$. Taking $\ell=2d-4,\ldots,0$ in \eqref{independent of x case 2 eqn 2} and repeating the above process, we obtain that each $g_{\alpha_{1}}$ is independent of $x$.
\end{proof}

\end{proof}

\section{Examples}\label{examples}

In this section, some examples are given, which indicate that the main results in this paper are optimal. For any $\lambda\in(0,1)$, consider the K\"ahler metric on $\C$:
\[
\omega_{\C,\lambda} = (1+|z|^{2})^{\lambda-1}\cdot\omega_{\C}.
\]
By straightforward calculation, it can be checked that
\begin{itemize}\setlength{\itemsep}{1mm}
\item $\omega_{\C,\lambda}$ is rotationally symmetric;
\item $\omega_{\C,\lambda}$ has positive holomorphic bisectional curvature;
\item the $\omega_{\C,\lambda}$-geodesic distance from the origin is asymptotic to $\lambda^{-1}\cdot|z|^{\lambda}$ as $|z|\to\infty$.
\end{itemize}

\begin{proposition}
Let $d\geq1$ be an integer. When $\lambda\in(\frac{d-1}{d},1)$, the K\"ahler manifold
\[
(\C^{n},\omega_{\lambda})=(\mathbb{C}\times\mathbb{C}^{n-1},\omega_{\mathbb{C},\lambda}+\omega_{\mathbb{C}^{n-1}})
\]
satisfies the following properties.
\begin{enumerate}\setlength{\itemsep}{1mm}
\item[(i)] $(\C^{n},\omega_{\lambda})$ has non-negative holomorphic bisectional curvature.
\item[(ii)] The dimension of space $\mathcal{O}_{d}(\C^{n},\omega_{\lambda})$ is
\[
\dim\mathcal{O}_{d}(\C^{n},\omega_{\lambda}) = N_{2} = \dim\mathcal{O}_{d}(\mathbb{C}^{n})-\binom{n+d-2}{d-1}.
\]
\end{enumerate}
\end{proposition}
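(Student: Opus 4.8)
The plan is to verify the two asserted curvature and dimension statements directly, relying only on the product structure and the rotationally symmetric metric on the $\C$-factor. For (i), I would observe that $\omega_\lambda$ is the product of $\omega_{\C,\lambda}$ on $\C$ and the flat metric $\omega_{\C^{n-1}}$ on $\C^{n-1}$. For a product of K\"ahler manifolds, the holomorphic bisectional curvature of the product, evaluated on vectors $u=(u_1,u'),v=(v_1,v')$, is the sum of the curvature of the first factor on $(u_1,v_1)$ and that of the second factor on $(u',v')$; since the flat factor contributes zero and $\omega_{\C,\lambda}$ has positive (hence non-negative) holomorphic bisectional curvature by the stated computation, the product has non-negative holomorphic bisectional curvature. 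Here one should be a little careful to state that ``bisectional curvature $\ge 0$'' for a product follows from the block-diagonal form of the curvature tensor, with no cross terms; this is standard and I would just cite the product structure of the curvature tensor.

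For (ii), the key point is to identify exactly which monomials on $\C^n$ lie in $\O_d(\C^n,\omega_\lambda)$. First I would record the distance asymptotics: with $r_\lambda$ denoting the $\omega_\lambda$-distance from the origin, the product distance formula gives $r_\lambda\big((0,0),(z_1,z')\big)\asymp |z_1|^{\lambda}+|z'|$ for $|z_1|,|z'|$ large, using that the $\omega_{\C,\lambda}$-distance from the origin is asymptotic to $\lambda^{-1}|z_1|^{\lambda}$ and the $\C^{n-1}$-factor is Euclidean. Next, by the classical Liouville theorem on $\C^n$ (Theorem \ref{classical Liouville theorem}), every $f\in\O_d(\C^n,\omega_\lambda)$ is a polynomial in $z=(z_1,z')$, since polynomial growth in $r_\lambda$ implies polynomial growth in the Euclidean distance (as $r_\lambda\lesssim$ a power of the Euclidean distance). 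So it remains to determine, for each monomial $z_1^{a}(z')^{\beta}$, whether $|z_1^a(z')^\beta|\le C(1+r_\lambda)^d$. Testing along $z'=0$, $|z_1|\to\infty$, the growth is $|z_1|^a$ against $(1+r_\lambda)^d\asymp |z_1|^{\lambda d}$, so we need $a\le \lambda d$; since $a$ is a non-negative integer and $\lambda\in(\frac{d-1}{d},1)$ gives $\lambda d\in(d-1,d)$, this forces $a\le d-1$. Conversely, if $a\le d-1$ and $a+|\beta|\le d$, then along any sequence $|z_1^a(z')^\beta|\le |z_1|^a|z'|^{|\beta|}\le C(1+|z_1|^\lambda+|z'|)^{a/\lambda+|\beta|}$, and I would check $a/\lambda+|\beta|\le d$: this uses $a/\lambda \le a\cdot\frac{d}{d-1}$... more precisely, since $a\le d-1<\lambda d$ we have $a/\lambda<d$, and when $a+|\beta|\le d$ with $|\beta|\ge 1$ a short interpolation (estimating $|z'|^{|\beta|}$ against $(1+|z_1|^\lambda+|z'|)^{|\beta|}$ and $|z_1|^a$ against $(1+|z_1|^\lambda+|z'|)^{a/\lambda}$, then bounding the total exponent) shows membership in $\O_d$; and when $|\beta|=0$ the condition is exactly $a\le d-1$. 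Thus $\O_d(\C^n,\omega_\lambda)$ is spanned precisely by the monomials of $z$ of degree $\le d-1$ together with the monomials of $z'$ of degree $d$.

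Finally, I would count: the monomials in $z=(z_1,\dots,z_n)$ of degree $\le d-1$ number $\binom{n+d-1}{d-1}=\dim\O_{d-1}(\C^n)$, and the monomials in $z'=(z_2,\dots,z_n)$ of degree exactly $d$ number $\binom{n+d-2}{d}$; adding and using the Pascal-type identity $\binom{n+d-1}{d-1}+\binom{n+d-2}{d}=\binom{n+d}{d}-\binom{n+d-2}{d-1}$ (which is exactly the computation already performed in \eqref{second rigidity eqn 2}) yields $\dim\O_d(\C^n,\omega_\lambda)=\dim\O_d(\C^n)-\binom{n+d-2}{d-1}$, as claimed.

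I expect the main obstacle to be the careful two-sided growth estimate in step (ii): the ``easy'' direction (ruling out monomials $z_1^a(z')^\beta$ with $a\ge d$, or more precisely with $a> \lambda d$) is immediate by testing along $z'=0$, but the converse inclusion — showing that every admissible monomial genuinely has $\omega_\lambda$-polynomial growth of degree $\le d$ — requires handling mixed terms where growth is split between the slowly-growing $z_1$-direction and the Euclidean $z'$-directions, which is where the precise hypothesis $\lambda>\frac{d-1}{d}$ gets used. Everything else (the product curvature formula, the Liouville reduction, the binomial bookkeeping) is routine.
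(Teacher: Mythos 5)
Your overall strategy for (ii) --- Liouville reduction to polynomials, growth testing, binomial bookkeeping --- parallels the paper's, and your treatment of (i), the reduction to polynomials, the inclusion $V\subseteq\mathcal{O}_{d}(\C^{n},\omega_{\lambda})$ via $\lambda d>d-1$, and the final binomial identity are all fine. But the core step, identifying $\mathcal{O}_{d}(\C^{n},\omega_{\lambda})$ with the span $V$ of the stated monomials, has a genuine gap. Your necessary condition is too weak: testing with $z'$ frozen only constrains the $z_{1}$-exponent and yields $a\le d-1$; it does not exclude the mixed monomials $z_{1}^{a}(z')^{\beta}$ with $a\ge1$ and $a+|\beta|=d$, such as $z_{1}z_{2}^{d-1}$, which must also be thrown out. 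Your own sufficiency check exposes the problem: for $a\ge1$ and $a+|\beta|=d$ one has $a/\lambda+|\beta|=d+a(\lambda^{-1}-1)>d$, so the ``short interpolation'' cannot close, and indeed $z_{1}z_{2}^{d-1}\notin\mathcal{O}_{d}(\C^{n},\omega_{\lambda})$: along $(z_{1},z')=(\mu,\mu^{\lambda}z_{0}')$ it grows like $\mu^{\lambda d+(1-\lambda)}$, while $(1+r_{\lambda})^{d}\asymp\mu^{\lambda d}$. Thus your intermediate criterion ``$a\le d-1$ and $a+|\beta|\le d$'' describes a strictly larger monomial set than $V$ when $d\ge2$ (of size $\binom{n+d}{d}-1$, not $N_{2}$) and is inconsistent with your own final conclusion. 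The correct test is along the anisotropic curve $(\mu,\mu^{\lambda}z_{0}')$, which gives $a/\lambda+|\beta|\le d$; for integer $a\ge1$ and $\lambda\in(\tfrac{d-1}{d},1)$ this is equivalent to $a+|\beta|\le d-1$, recovering exactly $V$. This is precisely the curve the paper uses in its Step 2.

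There is a second, logical gap: even with the correct list of admissible monomials, knowing which individual monomials lie in $\mathcal{O}_{d}(\omega_{\lambda})$ does not yield $\mathcal{O}_{d}(\omega_{\lambda})\subseteq V$, since a subspace of polynomials need not be spanned by the monomials it contains --- a priori some $f$ could contain inadmissible monomials whose contributions cancel along your test curves. You must either (a) invoke the torus-invariance of $\omega_{\lambda}$ (hence of $r_{\lambda}$) to average and conclude that each monomial component of $f$ satisfies the same growth bound, or (b) argue as the paper does: given $f\in\mathcal{O}_{d}(\omega_{\lambda})\setminus V$, subtract off its part lying in $V$ (already known to be in $\mathcal{O}_{d}$) to reduce to a homogeneous degree-$d$ polynomial $f=\sum_{k=0}^{m}f_{k}(z')z_{1}^{k}$ involving $z_{1}$, pick $z_{0}'$ with $f_{m}(z_{0}')\neq0$, and evaluate along $(\mu,\mu^{\lambda}z_{0}')$; the exponents $\lambda d+(1-\lambda)k$ are pairwise distinct, so the $k=m$ term dominates and violates the bound $|f|\le C\mu^{\lambda d}$. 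Without one of these two repairs the proof of $\mathcal{O}_{d}(\C^{n},\omega_{\lambda})\subseteq V$ is incomplete.
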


\begin{proof}
(i) is trivial. For (ii), write $z=(z_{1},z')$ where $z'=(z_{2},\dots,z_{n})$. For any $f\in\mathcal{O}_{d}(\C^{n},\omega_{\lambda})$, we have
\begin{equation}\label{example eqn 1}
|f(z_{1},z')| \leq C(1+|z_{1}|^{\lambda d}+|z'|^{d}).
\end{equation}
Since $\lambda<1$, then $|f(z)|\leq C(1+|z|)^{d}$. The classical Liouville theorem (i.e. Theorem \ref{classical Liouville theorem}) implies that $f$ must be a polynomial with degree at most $d$. We use $V$ to denote the space spanned by
\begin{itemize}\setlength{\itemsep}{1mm}
\item monomial of $z=(z_{1},z')$ with degree at most $d-1$;
\item monomial of $z'$ with degree $d$.
\end{itemize}
It is clear that
\[
\begin{split}
\dim V = {} & \dim\mathcal{O}_{d-1}(\mathbb{C}^{n})+\binom{n+d-2}{d} \\
= {} & \binom{n+d-1}{d-1}+\binom{n+d-2}{d} \\[0.2mm]
= {} & \binom{n+d}{d}-\binom{n+d-2}{d-1} \\
= {} & \dim\mathcal{O}_{d}(\mathbb{C}^{n})-\binom{n+d-2}{d-1}.
\end{split}
\]
To prove (ii), it suffices to show $V=\mathcal{O}_{d}(\mathbb{C}^{n},\omega_{\lambda})$. We split the argument into two steps.

\bigskip
\noindent
{\bf Step 1.} $V\subseteq\mathcal{O}_{d}(\mathbb{C}^{n},\omega_{\lambda})$.
\bigskip

Since $\lambda>\frac{d-1}{d}$, then $\lambda d>d-1$ and so for any $f\in V$,
\[
|f(z_{1},z')| \leq C(1+|z_{1}|^{d-1}+|z'|^{d})
\leq C(1+|z_{1}|^{\lambda d}+|z'|^{d}),
\]
which implies $f\in\mathcal{O}_{d}(\mathbb{C}^{n},\omega_{\lambda})$.

\bigskip
\noindent
{\bf Step 2.} $\mathcal{O}_{d}(\mathbb{C}^{n},\omega_{\lambda})\subseteq V$.
\bigskip

We argue by contradiction. Suppose that $\mathcal{O}_{d}(\mathbb{C}^{n},\omega_{\lambda})\nsubseteq V$, then there exists $f\in\mathcal{O}_{d}(\mathbb{C}^{n},\omega_{\lambda})$ such that
\begin{itemize}\setlength{\itemsep}{1mm}
\item[(a)] $f$ is a homogeneous polynomial with degree $d$;
\item[(b)] $f$ involves $z_{1}$;
\end{itemize}
Then $f$ can be written as
\[
f(z_{1},z') = \sum_{k=0}^{m}f_{k}(z')\cdot z_{1}^{k},
\]
where $m\leq d$, each $f_{k}(z')$ is a polynomial and $f_{m}(z')$ is a non-zero polynomial. From (a), we obtain that, for each $k$, either $f_{k}(z')\equiv0$, or $f_{k}(z')$ is a homogenous polynomial with degree $d-k$. In both cases, we have $f_{k}(\theta z')=\theta^{d-k}f_{k}(z')$ for any $\theta\in\mathbb{R}$. From (b), $f$ involves $z_{1}$ and so $m\geq1$. Now we fix $z_{0}'\in\C^{n-1}$ such that $f_{m}(z_{0}')\neq0$. For any $\mu\in\R$, we have
\[
f(\mu,\mu^{\lambda}\cdot z_{0}') = \sum_{k=0}^{m}f_{k}(\mu^{\lambda}\cdot z_{0}')\cdot \mu^{k}
= \sum_{k=0}^{m}f_{k}(z_{0}')\cdot\mu^{\lambda(d-k)+k}
= \sum_{k=0}^{m}f_{k}(z_{0}')\cdot\mu^{\lambda d+(1-\lambda)k}.
\]
Since $\lambda<1$, then $f_{m}(z_{0}')\cdot\mu^{\lambda d+(1-\lambda)m}$ is the dominated term and so
\begin{equation}\label{example eqn 2}
\lim_{\mu\to\infty}\Bigg|\frac{f(\mu,\mu^{\lambda}\cdot z_{0}')}{f_{m}(z_{0}')\cdot\mu^{\lambda d+(1-\lambda)m}}\Bigg| = 1.
\end{equation}
On the other hand, \eqref{example eqn 1} implies
\[
|f(\mu,\mu^{\lambda}\cdot z_{0}')| \leq C(1+|\mu|^{\lambda d}+|\mu^{\lambda}\cdot z_{0}'|^{d})
\leq C(1+|z_{0}'|^{d})\cdot\mu^{\lambda d}.
\]
Combining this with \eqref{example eqn 2}, we obtain
\[
1 = \lim_{\mu\to\infty}\Bigg|\frac{f(\mu,\mu^{\lambda}\cdot z_{0}')}{f_{m}(z_{0}')\cdot\mu^{\lambda d+(1-\lambda)m}}\Bigg|
\leq \lim_{\mu\to\infty}\frac{C(1+|z_{0}'|^{d})\cdot\mu^{\lambda d}}{|f_{m}(z_{0}')|\cdot\mu^{\lambda d+(1-\lambda)m}}
= 0,
\]
which is a contradiction.
\end{proof}

\section{Further questions}\label{questions section}

As mentioned in Section \ref{introduction}, Liu \cite{Liu16a} gave an alternative proof of Theorem \ref{dimension thm} under weaker curvature assumption. Precisely, Liu \cite{Liu16a} proved Theorem \ref{dimension thm} when the holomorphic sectional curvature is non-negative.

\begin{definition}
Let $(M,\omega)$ be an $n$-dimensional K\"ahler manifold and $K$ be a constant. We say that the holomorphic sectional curvature is greater than or equal to $K$ if
\[
R(u,\ov{u},u,\ov{u}) \geq K|u|^{4}
\]
for any $(1,0)$-vector $u$.
\end{definition}

\begin{theorem}[Liu \cite{Liu16a}]\label{Liu result}
Let $(M,\omega)$ be a complete $n$-dimensional K\"ahler manifold with non-negative holomorphic sectional curvature. For any integer $d\geq1$,
\[
\dim\mathcal{O}_{d}(M) \leq \dim\mathcal{O}_{d}(\mathbb{C}^{n}),
\]
and the equality holds if and only if $(M,\omega)$ is biholomorphically isometric to $(\mathbb{C}^{n},\omega_{\C^{n}})$.
\end{theorem}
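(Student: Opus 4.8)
The plan is to reduce Theorem~\ref{Liu result} to the already-known $n=1$ case combined with the structural input of Theorem~\ref{splitting} and the splitting arguments of Section~\ref{splitting theorem section}, but now working under the weaker hypothesis of non-negative holomorphic \emph{sectional} curvature rather than non-negative holomorphic \emph{bisectional} curvature. The inequality $\dim\mathcal{O}_{d}(M)\leq\dim\mathcal{O}_{d}(\mathbb{C}^{n})$ is exactly the content of Liu's theorem and may be quoted; the real work is the rigidity statement. So the task is: assuming $\dim\mathcal{O}_{d}(M)=\binom{n+d}{d}$, show $(M,\omega)$ is biholomorphically isometric to $(\mathbb{C}^{n},\omega_{\C^{n}})$.

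First I would pass to the universal cover $(\ti M,\ti\omega)$. By Lemma~\ref{lift}, the lift gives an injection $\O_{d}(M)\hookrightarrow\O_{d}(\ti M)$, and since $\dim\O_{d}(\ti M)\leq\dim\O_{d}(\mathbb{C}^{n})$ by Liu's inequality applied on $\ti M$ (which still has non-negative holomorphic sectional curvature), we get $\dim\O_{d}(\ti M)=\dim\O_{d}(\mathbb{C}^{n})$, so equality holds on $\ti M$ as well. Thus it suffices to treat the simply-connected case and then check, as in Theorem~\ref{simply connectedness}, that the deck group is trivial. For the simply-connected case, the key point is that equality $\dim\O_{d}(\ti M)=\binom{n+d}{d}$ forces the Poincar\'e--Siegel map $P_{p_{0},d}$ to be surjective (it is injective by Theorem~\ref{Poincare-Siegel map injective}, whose proof Liu also establishes under the sectional curvature hypothesis); in particular there is $f\in\O_{d}(\ti M)$ with $P_{p_{0},d}(f)$ a degree-$d$ homogeneous polynomial, hence $\ord_{p_{0}}f=\deg f=d$. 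Then I would invoke the splitting mechanism: the analog of Theorem~\ref{splitting} under non-negative holomorphic sectional curvature (this is what Liu \cite{Liu16} must prove en route to his rigidity statement) gives a holomorphic isometric splitting $(\ti M,\ti\omega)\cong(Y^{n-1}\times\C,\omega_{Y}+\omega_{\C})$. Iterating the splitting argument of Theorem~\ref{splitting Ck}, with the dimension bookkeeping exactly as in Corollary~\ref{flatness} but starting from the maximal value $N_{1}=\binom{n+d}{d}$ instead of $N_{2}+1$, peels off all $n$ Euclidean factors and yields $(\ti M,\ti\omega)\cong(\mathbb{C}^{n},\omega_{\C^{n}})$.

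Finally I would run the argument of Theorem~\ref{simply connectedness}: every $\sigma\in\pi_{1}(M)$ is a biholomorphic isometry of $\C^{n}$, hence $\sigma(z)=Az+c$ by Theorem~\ref{biholomorphic isometry group of Cn}; the $\sigma$-invariance of the full space $\ti F$ of lifted functions (which here has the maximal dimension $\binom{n+d}{d}$) together with Lemma~\ref{translation lemma} — now applied with the even more generous dimension count — forces the translation part to vanish and the rotation part to fix a point, so $\sigma$ is the identity. Hence $M=\ti M\cong(\mathbb{C}^{n},\omega_{\C^{n}})$.

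The main obstacle is that the clean splitting input (Theorem~\ref{splitting}) and the injectivity statement (Theorem~\ref{Poincare-Siegel map injective}) are quoted in this paper only under non-negative holomorphic \emph{bisectional} curvature; to make the proof self-contained under the weaker holomorphic sectional curvature hypothesis one must either cite the corresponding statements from Liu \cite{Liu16} directly or reprove them, and verifying that the $(n-1)$-dimensional factor $Y$ inherits non-negative holomorphic sectional curvature (so that the induction closes) is exactly the delicate step — a product $Y\times\C$ has non-negative holomorphic sectional curvature iff $Y$ does, which is fine, but one must be careful that the splitting theorem itself is available in this generality. Modulo quoting Liu's versions of Theorems~\ref{splitting} and \ref{Poincare-Siegel map injective}, the rest is a routine repetition of Sections~\ref{splitting theorem section} and \ref{simply connectedness section} with $N_{1}$ in place of $N_{2}+1$.
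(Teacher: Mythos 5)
There is a genuine gap, and it sits exactly where you flagged it but then waved it through. First, note that the paper offers no proof of Theorem~\ref{Liu result} at all: it is quoted from Liu \cite{Liu16}, so there is no in-paper argument to reproduce. Your plan reduces the rigidity to a holomorphic isometric splitting theorem valid under non-negative holomorphic \emph{sectional} curvature, i.e.\ an analogue of Theorem~\ref{splitting}. That analogue is not supplied by this paper, is not proved in \cite{CFYZ06} (whose de Rham--type splitting uses non-negative \emph{bisectional} curvature in an essential way, to show that the relevant null distribution of the complex Hessian of the plurisubharmonic function built from $f$ is parallel), and --- contrary to your parenthetical ``this is what Liu must prove en route'' --- is not what Liu proves either. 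Liu's rigidity argument in \cite{Liu16} does not pass through a splitting of the manifold; it proceeds by analyzing the equality case of the three-circle theorem (the convexity of $\log M(r)$ in $\log r$ for $M(r)=\sup_{B(p_0,r)}|f|$), which is precisely the device that lets him avoid needing bisectional curvature. Under only a holomorphic sectional curvature bound, mixed curvature terms are uncontrolled and the parallelism argument behind Theorem~\ref{splitting} breaks down, so your induction has no base mechanism. Asserting the splitting ``must'' hold because the final theorem is true is circular.

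The surrounding scaffolding of your proposal is fine and does match the paper's general toolkit: the passage to the universal cover via Lemma~\ref{lift}, the observation that equality forces the Poincar\'e--Siegel map to be bijective and hence produces $f$ with $\ord_{p_0}f=\deg f=d$ (using the $\ord\leq\deg$ inequality, which Liu does establish under the sectional hypothesis), the dimension bookkeeping of Theorem~\ref{splitting Ck} with $N_1$ in place of $N_2+1$, and the elimination of $\pi_1(M)$ via Theorem~\ref{simply connectedness} and Lemma~\ref{translation lemma} once $(\ti{M},\ti{\omega})\cong(\C^{n},\omega_{\C^{n}})$ is known. But all of that is downstream of the missing splitting input. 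To repair the proof you would either have to prove a sectional-curvature splitting theorem (a nontrivial open-ended task) or abandon the splitting route and follow Liu's actual argument through the equality case of the three-circle theorem.
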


Motivated by Theorem \ref{Liu result}, it is natural to expect that the main results in this paper still hold under non-negative holomorphic sectional curvature assumption.

\begin{question}
Let $(M,\omega)$ be a complete $n$-dimensional K\"ahler manifold with non-negative holomorphic sectional curvature. If there exists an integer $d\geq1$ such that
\[
\dim\mathcal{O}_{d}(M) \geq N_{2}+1 = \dim\mathcal{O}_{d}(\mathbb{C}^{n})-\binom{n+d-2}{d-1}+1,
\]
then $(M,\omega)$ is biholomorphically isometric to $(\mathbb{C}^{n},\omega_{\C^{n}})$.
\end{question}

\begin{question}
Let $(M,\omega)$ be a complete $n$-dimensional K\"ahler manifold with non-negative holomorphic sectional curvature. If there exists an integer $d\geq1$ such that
\[
\dim\mathcal{O}_{d}(M) \geq N_{3}+1 = \binom{n-1+d}{d}+1,
\]
then $(M,\omega)$ has maximal volume growth.
\end{question}

\begin{question}
Let $(M,\omega)$ be a complete $n$-dimensional K\"ahler manifold with non-negative holomorphic sectional curvature. If there exists an integer $d\geq1$ such that
\[
\dim\mathcal{O}_{d}(M) \geq N_{3}+1 = \binom{n-1+d}{d}+1,
\]
then $M$ is biholomorphic to $\mathbb{C}^{n}$.
\end{question}

\end{document}